\theoremstyle{plain}
\newtheorem{Th}{Theorem}[section]
\newtheorem{Lemma}[Th]{Lemma}
\newtheorem{Prop}[Th]{Proposition}
\newtheorem{PropDef}[Th]{Proposition/Definition}
\theoremstyle{definition}
\newtheorem{Def}[Th]{Definition}
\newtheorem{Conj}[Th]{Conjecture}
\newtheorem{Rem}[Th]{Remark}
\newtheorem{?}[Th]{Problem}
\newtheorem{Ex}[Th]{Example}
\newcommand{\im}{\operatorname{im}}
\numberwithin{equation}{section}
\newcommand{\inv}{^{-1}}
\newcommand{\PP}{\mathbb{P}}
\newcommand{\ZZ}{{\mathbb Z}}
\newcommand{\NN}{{\mathbb N}}
\newcommand{\QQ}{{\mathbb Q}}
\newcommand{\CC}{{\mathbb C}}
\newcommand{\Hrm}{\mathrm{H}}
\DeclareMathOperator{\Proj}{Proj}
\DeclareMathOperator{\Aut}{Aut}
\DeclareMathOperator{\Spec}{Spec}
\newcommand{\pr}{^\prime}
\newcommand{\Hom}{\mathrm{Hom}}
\newcommand{\Pic}{\textrm{Pic}}
\newcommand{\Oc}{\mathcal{O}}
\newcommand{\Ac}{\mathcal{A}}
\newcommand{\Fc}{\mathcal{F}}
\newcommand{\Hc}{\mathcal{H}}
\newcommand{\Xc}{\mathcal{X}}
\newcommand{\Xm}{\mathcal{X}}
\newcommand{\OKs}{\mathit{O}_{K,S}}
\newcommand{\oQ}{\overline{\mathbb{Q}}}
\newcommand{\ok}{\overline{k}}
\newcommand{\sbe}{\subseteq}
\newcommand{\spe}{\supseteq}
\newcommand{\OX}{{\mathcal{O}_{X}}}
\newcommand{\OY}{{\mathcal{O}_{Y}}}
\newcommand{\kb}{{k\left(b\right)}}
\DeclareMathOperator{\rank}{rank}
\DeclareMathOperator{\Homology}{H}
\author{Philipp Licht}
\address{Philipp Licht \\
	Institut f\"{u}r Mathematik\\
	Johannes Gutenberg-Universit\"{a}t Mainz\\
	Staudingerweg 9, 55099 Mainz\\
	Germany.}
\email{plicht05@uni-mainz.de}
\subjclass[2010]
{14G99 
	(14J45,  
	14D23, 
	14G05,  
	32Q45)} 
\keywords{Shafarevich conjecture, persistence conjecture, arithmetic hyperbolicity, Fano threefolds}
\begin{document}

\title[Hyperbolicity of the moduli of Fano threefolds of Picard rank 1, index 1 and degree 4]{Hyperbolicity of the moduli of certain Fano threefolds}
\maketitle

\begin{abstract}
	We prove the Shafarevich conjecture for Fano threefolds of Picard rank 1, index 1 and degree 4. 
	
\end{abstract}


\section{Introduction}

In \cite{Faltings1983} Faltings proved that, given a number field $K$, 
a finite set of places $S$ on $K$ and a positive integer $g$, there are only finitely many isomorphism classes  of abelian varieties of dimension $g$ over $K$ with good reduction outside $S$.  Faltings's finiteness result illustrates a more general phenomenon, commonly referred to as the Shafarevich conjecture, that the set of objects of fixed type over a number field $K$ with good reduction outside $S$ should be finite. 
This far-reaching conjecture has been verified for    K3 surfaces (and hyperk\"ahler varieties) 
\cite{Yves1996, Lie2022, She2017, Teppei2020}, 
cyclic covers \cite{JavanpeykarLoughranMathur},
ample hypersurfaces in abelian varieties \cite{LawrenceSawin} (building on \cite{LawrenceVenkatesh}),
polycurves \cite{IppeiNagamachi2019, Javanpeykar2015PolarizedSurfaces},
flag varieties \cite{Javanpeykar2015Flag}, 
complete intersections of Hodge level at most one \cite{JaLoCI2017},
certain types of Fano threefolds \cite{JLo18}, 
del Pezzo surfaces \cite{Scholl1985}, 
and 
Enriques surfaces \cite{Takamatsu2020}.

In this paper, we focus on the Shafarevich conjecture for Fano threefolds. Interestingly, the Shafarevich conjecture for Fano threefolds can fail; see \cite[Theorem~1.4]{JLo18} for a precise statement. Our main result however verifies  the Shafarevich conjecture for (smooth) Fano threefolds  with Picard rank 1, index 1 and degree 4 (see Definition \ref{d:Fano variety} and Definition \ref{d:type}). Note that this is one of the cases for which the Shafarevich conjecture  was not handled by Loughran and Javanpeykar \cite{JLo18}.

\begin{Th}[Shafarevich conjecture for Fano threefolds of type (1,1,4)]\label{t:Shaferevic for Fanos}
	Let $K$ be a number field and let $S$ be a finite set of places on $K$. Then the set of $K$-isomorphism classes of Fano threefolds of Picard rank 1, index 1 and degree 4 over $K$ with good reduction outside $S$ is finite. 
\end{Th}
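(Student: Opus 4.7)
The plan is to reduce Theorem~\ref{t:Shaferevic for Fanos} to already-known Shafarevich-type results via the standard equivalence between the Shafarevich conjecture and arithmetic hyperbolicity of the moduli stack $\mathcal{M}$ of Fano threefolds of type $(1,1,4)$. The strategy is to construct quasi-finite, good-reduction-preserving morphisms from $\mathcal{M}$ to moduli stacks that are already known to be arithmetically hyperbolic, and then invoke the persistence-of-arithmetic-hyperbolicity framework referenced in the keywords.

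First, I would invoke the Iskovskikh--Mukai classification: over an algebraically closed field of characteristic zero, a smooth Fano threefold of Picard rank $1$, index $1$ and degree $4$ is either (a) a smooth quartic hypersurface $X \subset \mathbb{P}^4$, or (b) a smooth double cover of a smooth quadric threefold $Q \subset \mathbb{P}^4$ branched over a complete intersection $B = Q \cap X_4$ with a quartic. This splits $\mathcal{M}$ into two locally closed substacks; since arithmetic hyperbolicity is preserved under finite stratifications, it suffices to treat each separately. For the double-cover stratum (b), forgetting the cover and remembering only $(Q, B)$ should yield a quasi-finite morphism to the moduli of smooth $(2,4)$-complete intersections in $\mathbb{P}^4$, for which Shafarevich-type results are accessible through the cyclic-covers framework of \cite{JavanpeykarLoughranMathur} or via the complete-intersections results in the spirit of \cite{JaLoCI2017}. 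For the quartic-hypersurface stratum (a), I would resort to a Hodge-theoretic route: the intermediate Jacobian $J(X)$, a principally polarized abelian variety of dimension $30$, together with the Abel--Jacobi image of the Fano surface of lines $F(X)$, should produce a morphism to an arithmetically hyperbolic target (the moduli of ppav's, covered by Faltings, or a suitable Griffiths period stack).

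Second, for each of these morphisms I would verify good-reduction preservation after possibly enlarging $S$: good reduction of $X$ at $v \notin S$, together with the explicit extension of the anticanonical model and its defining equations (respectively the smooth extension of the Abel--Jacobi map), implies good reduction of the image. Combined with quasi-finiteness of the morphism and the Shafarevich input for the target, persistence yields the Shafarevich conjecture for each stratum, and hence for all of $\mathcal{M}$. A final Galois-cohomological step bounds the $K$-isomorphism classes within each $\overline{K}$-isomorphism class, using that the relevant twisting cocycles ramify only over $S$.

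The main obstacle will be the quartic-hypersurface case. Unlike the neighbouring type $(1,2,4)$ (intersections of two quadrics in $\mathbb{P}^5$, where the discriminant of the defining pencil reduces the problem cleanly to Faltings for genus-$2$ curves via the Reid--Desale--Ramanan correspondence $F(X) \cong \mathrm{Jac}(C)$), there is no such classical reduction of Shafarevich for smooth quartic threefolds. The hard part is therefore constructing a Hodge-theoretic or Fano-surface-of-lines morphism from the quartic stratum, verifying that it has finite fibres with controlled twists, and checking that it preserves integral models in a way compatible with persistence; this is precisely where the input from the persistence conjecture, together with the hyperbolicity of the chosen target moduli, has to carry the argument.
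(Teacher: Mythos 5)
Your overall architecture (stratify the moduli stack via the Iskovskikh classification into the quartic and double-cover loci, map each stratum quasi-finitely to an arithmetically hyperbolic target, and use persistence of arithmetic hyperbolicity together with a stacky Chevalley--Weil argument to handle twists and integral points) is the same as the paper's. However, there is a genuine gap in your treatment of the double-cover stratum. The branch surface $B = Q \cap X_4$ is a $(2,4)$ complete intersection surface in $\mathbb{P}^4$ with $\omega_B \cong \mathcal{O}_B(1)$, hence $p_g(B)=h^0(B,\omega_B)=5>0$, so $H^2(B)$ has Hodge level $2$: the complete-intersection results of \cite{JaLoCI2017} (which require Hodge level at most one) do not apply, and the cyclic-covers framework of \cite{JavanpeykarLoughranMathur} only reduces the problem to the arithmetic hyperbolicity of the moduli of the branch divisors, which is precisely what is unavailable here. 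So the map ``forget the cover, remember $(Q,B)$'' lands in a target whose arithmetic hyperbolicity is open, and this stratum is not handled by your argument.

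The paper instead treats \emph{both} strata with the same intermediate-Jacobian period map $p\colon \mathcal{F}_{\CC}^{an}\to \mathcal{A}_{30,\CC}^{an}$. On the quartic stratum, quasi-finiteness follows from classical infinitesimal Torelli for hypersurfaces \cite{CaG80}; this is in fact the easy case, not the ``main obstacle'' as you suggest. The genuinely new input is on the hyperelliptic stratum: one first shows (Theorem \ref{t:deformations of hyperelliptic Fanos}) that the tangent space of $\mathcal{H}$ at $X$ is the invariant subspace $H^1(X,T_X)^{\iota}$ for the double-cover involution $\iota$, and then invokes an infinitesimal Torelli theorem for these double covers on that invariant part \cite[Theorem~1.3]{LichtTorelli2022}. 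You would need to supply this (or an equivalent) to close the gap; your route through the branch divisor does not. A smaller point: your concern about ``good-reduction preservation'' of the period map is dissolved in the paper by using the analytic period map only to prove quasi-finiteness and then transferring arithmetic and geometric hyperbolicity from $\mathcal{A}_{30}$ to $\mathcal{F}$ via \cite[Theorem~6.4]{JLChevalleyWeil2020} and the persistence results, rather than by spreading the period map out over $\mathcal{O}_{K,S}$.
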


The Shafarevich conjecture is a statement about the finiteness of integral points on certain moduli stacks. The rough idea is that sets of integral points on moduli stacks should be finite by Lang's conjecture \cite{JBook, Lang1986} if the stack is hyperbolic (in a suitable sense).  

In fact, to prove Theorem \ref{t:Shaferevic for Fanos}, we translate the claimed finiteness statement into a property of the moduli stack of such Fano threefolds. 
 It is therefore crucial to understand the structure of the moduli of such Fano threefolds.  
After Iskovskikh's classification \cite[table~3.5]{Isk80},   the Fano threefolds of Picard rank $1$, index $1$ and degree $4$  come in two types:
\begin{enumerate}[a)]
	\item smooth quartics, and
	\item double covers of smooth quadrics in $\mathbb{P}^4$.
\end{enumerate}
The latter type is called hyperelliptic. We show the folklore fact that in families of such Fano varieties, the locus of hyperelliptic ones is closed.

\begin{Th}[Closedness of the hyperelliptic locus]
	\label{t:Main result ii}
	Let $B$ be a scheme over $ \ZZ[1/2] $. Let $X \to B$ be a Fano threefold of type $(1,1,4)$ over $B$. Then the set of $b\in B$  such that $ X_b $ is hyperelliptic is closed in $B$. 
\end{Th}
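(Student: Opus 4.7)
The plan is to realize the hyperelliptic locus as the zero locus of the determinant of a natural multiplication map of locally free sheaves on $B$. Set $\mathcal{V} := \pi_* \omega_{X/B}^{-1}$ and $\mathcal{W} := \pi_* \omega_{X/B}^{-2}$ and consider
$$ \mu \colon \Sym^2 \mathcal{V} \longrightarrow \mathcal{W}. $$

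First, I would verify that $\mathcal{V}$ and $\mathcal{W}$ are locally free of ranks $5$ and $15$ with formation commuting with arbitrary base change. By cohomology and base change this amounts to the vanishing $H^i(X_b, \omega_{X_b}^{-k}) = 0$ for all $b \in B$, all $i > 0$, and $k \in \{1,2\}$. Invoking the classification \cite{Isk80} fibrewise, either $X_b$ is a smooth quartic in $\PP^4$, in which case the vanishings follow from the Koszul sequence $0 \to \Oc_{\PP^4}(k-4) \to \Oc_{\PP^4}(k) \to \Oc_{X_b}(k) \to 0$ and the standard cohomology of $\PP^4$; or $X_b$ is a double cover $\pi_b \colon X_b \to Q_b \subset \PP^4$ of a smooth quadric threefold, in which case the splitting $\pi_{b,*}\Oc_{X_b} = \Oc_{Q_b} \oplus \Oc_{Q_b}(-2)$ (whose existence uses $2 \in \Oc_B^{\times}$) reduces the claim to the analogous vanishings on $Q_b$.

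Next comes the fibrewise analysis of $\mu$: both sides have rank $15$. If $X_b$ is a quartic, then $H^0(X_b, \omega_{X_b}^{-k}) = H^0(\PP^4, \Oc(k))$ for $k \in \{1,2\}$, and $\mu_b$ is identified with the standard multiplication $\Sym^2 H^0(\Oc_{\PP^4}(1)) \to H^0(\Oc_{\PP^4}(2))$, which is an isomorphism. If $X_b$ is hyperelliptic, pullback along $\pi_b$ gives $H^0(X_b, \omega_{X_b}^{-1}) \cong H^0(Q_b, \Oc_{Q_b}(1))$, while the splitting above yields $H^0(X_b, \omega_{X_b}^{-2}) \cong H^0(Q_b, \Oc_{Q_b}(2)) \oplus H^0(Q_b, \Oc_{Q_b})$; the image of $\mu_b$ lies in the first summand, and its kernel is the one-dimensional line spanned by the defining equation of $Q_b$.

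Thus $\mu_b$ is an isomorphism exactly when $X_b$ is a quartic, and has a one-dimensional kernel exactly when $X_b$ is hyperelliptic. The hyperelliptic locus therefore coincides with the zero locus of the global section $\det \mu$ of the invertible sheaf $(\det \Sym^2 \mathcal{V})^{\vee} \otimes \det \mathcal{W}$, which is closed in $B$. The main technical point is the cohomological vanishing needed to apply cohomology and base change uniformly over $B$; as sketched, this reduces to elementary calculations on $\PP^4$ and on smooth quadric threefolds and uses nothing beyond the hypothesis that $2$ is invertible on $B$.
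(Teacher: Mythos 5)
Your argument is correct, and it takes a genuinely different route from the paper. The paper first establishes a Zariski-local presentation of the family as a weighted complete intersection $V_+(q_2,q_4)\sbe \PP_R(1,1,1,1,1,2)$ (Theorem \ref{t:local_presentation_for_Fanos}, which requires trivialising $f_*\omega_{X/B}^{-l}$ for $l\le 4$ and extracting the relations), writes $q_2=q_2'+cz$, and identifies the hyperelliptic locus locally with $V(c)$; the coefficient $c$ is, up to a unit, exactly your $\det\mu$, since $c$ is invertible precisely when $\zeta$ lies in the image of $\Sym^2\Homology^0(\omega^{-1})\to\Homology^0(\omega^{-2})$. Your determinantal formulation globalises this observation directly and avoids the local presentation altogether: the fibrewise dichotomy from the Iskovskikh--Shepherd-Barron classification, the cohomology vanishings on $\PP^4$ and on quadric threefolds (where $2\in\Oc_B^\times$ enters through the splitting of $\pi_{b,*}\Oc_{X_b}$), and cohomology and base change are all that is needed, and the rank count ($\Sym^2$ of a rank-$5$ bundle versus the rank-$15$ sheaf $f_*\omega_{X/B}^{-2}$) makes $V(\det\mu)$ a closed subset cutting out exactly the hyperelliptic fibres. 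What your approach does not immediately deliver, and what the paper's longer route buys, is the scheme structure and universal property of the hyperelliptic locus (Proposition/Definition \ref{p:hyperelliptic_locus}), which the paper needs to show that $\Hc\to\Fc$ is a closed immersion of stacks; one could in principle endow $V(\det\mu)$ with that universal property, but verifying it would essentially force one back to the local presentation. For the set-theoretic closedness asserted in Theorem \ref{t:Main result ii}, your proof is complete and arguably cleaner.
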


The proof of Theorem \ref{t:Main result ii}  uses  the fact that Fano threefolds of type $(1,1,4)$ are   weighted complete intersections, locally for the Zariski topology. This is well-known for such Fano threefolds over algebraically closed fields, and our contribution is that this also persists over more general base schemes.

\begin{Th}[Zariski local presentation]\label{t:local_presentation_for_Fanos}
	Let $B$ be a scheme over $ \ZZ[1/2] $ and let $ f \colon X \to B $ be a Fano threefold of type (1,1,4).
	Let $b\in B$ be a point. There is an affine open neighbourhood $ U = \Spec R \sbe B $ of $b$ and polynomials $q_2,q_4 \in R[x_0,\dots,x_4,z]$ of degree $\deg(q_2) = 2$ and $ \deg(q_4) = 4$  with respect to the grading  given by $ \deg(x_i) = 1 $ and $ \deg(z) = 2$ such that the   Fano threefold $ f\vert_U\colon f\inv(U) \to U $ is isomorphic to $ V_+(q_2,q_4) \sbe \PP_R(1,1,1,1,1,2) $, and $\omega_{f\inv(U)/U}\inv$ is identified with $ \Oc(1) $. 
\end{Th}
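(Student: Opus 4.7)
The plan is to study the relative anticanonical ring $R_\ast := \bigoplus_{n \geq 0} f_\ast \omega_{X/B}^{-n}$ and build the required presentation by cohomology and base change, splitting into the hyperelliptic and non-hyperelliptic cases. First I would localize to $B = \Spec R$ affine and shrink so that each $P_n := f_\ast \omega_{X/B}^{-n}$ is locally free of rank $h^0(X_b, -nK_{X_b})$ with formation commuting with arbitrary base change (using fiberwise Kodaira vanishing on the Fano threefolds, available over $\ZZ[1/2]$ by spreading out from characteristic $0$). Riemann--Roch for smooth Fano threefolds of genus $3$ gives $\rank P_1 = 5$ and $\rank P_2 = 15$. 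Shrinking further, trivialize $P_1 \cong R^{\oplus 5}$ with chosen basis $x_0, \dots, x_4 \in H^0(X, -K_{X/B})$.

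The key object is the multiplication map $\mu \colon \Sym^2 P_1 \to P_2$ between rank-$15$ free modules. Fiberwise, $\mu_b$ is an isomorphism iff $X_b$ is non-hyperelliptic, and has $1$-dimensional kernel and cokernel iff $X_b$ is hyperelliptic. After further shrinking, I would ensure the fiberwise type is constant on $U$: if $b$ is non-hyperelliptic, invertibility of $\det \mu$ at $b$ is an open condition; if $b$ is hyperelliptic, the local freeness of $P_2$ implies $(\ker \mu) \otimes_R k(b) = \ker \mu_b$, and a Nakayama generator $q_2 \in \ker \mu \subset R[x_0,\dots,x_4]_2$ remains in $\ker \mu_{b'}$ for all $b'$ at which its coefficients do not all vanish—an open neighborhood of $b$ on which every fiber is hyperelliptic. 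In the hyperelliptic case, the resulting constant rank $14$ of $\mu$ makes $\ker \mu$ and $\coker \mu$ into line bundles, trivializable after shrinking.

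In the non-hyperelliptic case, the anticanonical morphism $\phi \colon X \to \PP^4_U$ is a closed immersion by the fiberwise criterion (proper, and fiberwise the embedding of a smooth anticanonical quartic). Cohomology and base change applied to $\mathcal{I}_X(4)$ yields a uniform quartic generator $q_4(x) \in R[x_0,\dots,x_4]_4$. Setting $q_2 := z$ then realizes $X \cong V_+(z, q_4) \subset \PP_U(1,1,1,1,1,2)$ via the projection onto $V_+(q_4) \subset \PP^4_U$.

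In the hyperelliptic case, $q_2 = q_2(x)$ cuts out a smooth quadric bundle $Q := V_+(q_2) \subset \PP^4_U$, through which $\phi$ factors as a double cover $\pi \colon X \to Q$. Since $2 \in R^\times$, the $\pm 1$ eigenspace decomposition under the Galois involution gives $\pi_\ast \Oc_X \cong \Oc_Q \oplus \Oc_Q(-2)$; choose $z \in P_2$ lifting a generator of $\coker \mu$, which corresponds to a generator of the $\Oc_Q$-summand of $\pi_\ast \Oc_X \otimes \Oc_Q(2) = \Oc_Q(2) \oplus \Oc_Q$. The square $z^2 \in P_4$ is Galois-even, hence lies in the $\Oc_Q(4)$-summand of $\pi_\ast \Oc_X \otimes \Oc_Q(4) = \Oc_Q(4) \oplus \Oc_Q(2)$, so $z^2 \equiv q_4'(x) \pmod{q_2}$ for some $q_4'(x) \in R[x_0,\dots,x_4]_4$ (the branch quartic), and $q_4 := z^2 - q_4'(x)$ furnishes the presentation $X \cong V_+(q_2, q_4)$. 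The main anticipated obstacle is in this case: one must verify that the natural graded $R$-algebra surjection $R[x_0,\dots,x_4, z]/(q_2, q_4) \twoheadrightarrow R_\ast$ is in fact an isomorphism, i.e., that no further relations appear in higher degrees. I would handle this by comparing Hilbert polynomials degree by degree and reducing to the classical fiberwise result over algebraically closed fields via the base-change compatibility of the $P_n$. The identification $\omega_{f^{-1}(U)/U}^{-1} \cong \Oc(1)$ is then immediate from the construction of the $x_i$ as sections of $-K_{X/B}$ (or equivalently from weighted adjunction for complete intersections of type $(2,4)$ in $\PP(1,1,1,1,1,2)$).
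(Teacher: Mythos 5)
Your overall strategy (cohomology and base change for $f_*\omega_{X/B}^{-n}$, trivialize, extract relations, conclude by a fiberwise isomorphism criterion) is the right one and matches the paper's in outline, but the case division at the heart of your argument contains a genuine gap. You claim that if $X_b$ is hyperelliptic then, because $P_2$ is locally free, $(\ker\mu)\otimes_R k(b)=\ker\mu_b$, so a Nakayama generator $q_2\in\ker\mu\subseteq R[x_0,\dots,x_4]_2$ exists and forces all nearby fibres to be hyperelliptic. Both claims are false. Kernels do not commute with base change: for a map $\mu$ of free modules the obstruction is $\Tor_1(\coker\mu,k(b))$, which is nonzero precisely in the situation at hand. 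Concretely, for the family $V_+(q_2(x)+tz,\;q_4(x)-z^2)\subseteq\PP_{k[t]}(1,1,1,1,1,2)$ over $\mathbb{A}^1_k$, the fibre at $t=0$ is hyperelliptic while every other fibre is a smooth quartic; here $\ker\mu=0$ as an $R$-module (it vanishes generically and $\Sym^2P_1$ is torsion-free) even though $\ker\mu_0$ is one-dimensional. The hyperelliptic locus is \emph{closed} and not open (this is Theorem \ref{t:Main result ii} of the paper), so no amount of shrinking around a hyperelliptic point $b$ will make the fibrewise type constant, and your hyperelliptic-case construction (quadric bundle $Q$, double cover $\pi$, eigenspace decomposition of $\pi_*\Oc_X$) cannot get started at such a point.

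The repair is to avoid the dichotomy altogether, which is what the paper does: instead of $\Sym^2P_1\to P_2$, pick one extra section $\zeta\in\Hrm^0(X,\omega_{X/B}^{-2})$ whose restriction to $X_b$ completes the image of the $15$ monomials $\xi_i\xi_j$ to a generating set of the $15$-dimensional space $\Hrm^0(X_b,\omega_{X_b}^{-2})$ (such a $\zeta$ exists in both cases, since the monomials span a subspace of dimension at least $14$). After shrinking, the resulting map $R^{16}\to P_2$ is a \emph{surjection} onto a free module, so its kernel is free of rank $1$ and does commute with base change; a generator gives $q_2\in R[x_0,\dots,x_4,z]_2$ whose $z$-coefficient is a unit at non-hyperelliptic fibres, zero at hyperelliptic ones, and possibly neither on a mixed neighbourhood. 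The same surjection trick in degree $4$ produces $q_4$, and one concludes as you do by checking the map $X\to V_+(q_2,q_4)$ is an isomorphism on each fibre. Your non-hyperelliptic branch (openness of $\det\mu\in R^\times$, closed immersion into $\PP^4_U$, $q_4$ from $\mathcal{I}_X(4)$) is fine on its own, but it only covers the open stratum.
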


Let $ \mathcal{F}$ be the moduli stack of Fano threefolds of type $(1,1,4)$, let $ \mathcal{Q} \sbe \mathcal{F}$ be the locus of smooth quartics and let $ \mathcal{H} \sbe \mathcal{F} $ be the locus of hyperelliptic Fano threefolds; see Section \ref{s:stack of Fano threefolds} and Section \ref{s:stacks of hyperelliptics and quartics} for definitions.  Theorem \ref{t:Main result ii} implies that $ \Fc $ is stratified via $\Hc $ and $ \mathcal{Q}$.

\begin{Th}[Stratification of the moduli of Fano threefolds of type (1,1,4)] \label{t:stratification of Fano stack} 
	The inclusion $ \mathcal{H} \to \mathcal{F} $ is a   closed immersion, the inclusion $ \mathcal{Q} \to \mathcal{F}$ is an open immersion, and  for any field $k$ with $2\in k^\times$, we have 
	$$
	\mathcal{F}(k) = \mathcal{Q}(k) \sqcup \mathcal{H}(k).
	$$
\end{Th}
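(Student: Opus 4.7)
The strategy is to exhibit both strata as explicit closed/open subschemes in the Zariski local model supplied by Theorem \ref{t:local_presentation_for_Fanos}. Given a morphism $T \to \mathcal{F}$ classifying a family $f\colon X \to T$ and a point $t \in T$, Theorem \ref{t:local_presentation_for_Fanos} produces an affine neighbourhood $U = \Spec R$ of $t$ with $X|_U \cong V_+(q_2, q_4) \subset \PP_R(1,1,1,1,1,2)$ and $\omega^{-1}_{X|_U / U} \cong \Oc(1)$. Because $z$ has weight $2$ and $\deg q_2 = 2$, the polynomial $q_2$ decomposes uniquely as
\[
 q_2 \;=\; a\,z \;+\; c(x_0, \ldots, x_4), \qquad a \in R, \ c \in R[x_0,\ldots,x_4]_2.
\]
I plan to show that the hyperelliptic locus on $U$ coincides with the closed subscheme $V(a) \subseteq U$ and the smooth-quartic locus with the open subscheme $D(a) \subseteq U$.

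On $D(a) = \Spec R_a$, the equation $q_2 = 0$ yields $z = -a^{-1} c(x)$, and substituting into $q_4$ presents $X|_{D(a)}$ as the smooth quartic hypersurface $V_+\bigl(q_4(x, -a^{-1} c(x))\bigr) \subset \PP^4_{R_a}$, anti-canonically embedded. On $V(a)$, the equation $q_2 = c(x)$ is independent of $z$ and defines a quadric in $\PP^4_{R/(a)}$; writing $q_4 = \alpha z^2 + z\,\ell(x) + d(x)$ with $\alpha \in R/(a)$ (necessarily a unit by smoothness of the fibres, together with the hypothesis $2 \in R^\times$), completing the square in $z$ exhibits $X|_{V(a)}$ as a double cover of this quadric, i.e.\ a hyperelliptic family. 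Thus the local strata have the predicted form, and fibrewise this recovers the Iskovskikh dichotomy.

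The substantive point is intrinsicness of the ideal $(a) \subseteq R$, i.e.\ its independence from the chosen presentation. Any two presentations of $X|_U$ as in Theorem \ref{t:local_presentation_for_Fanos} are related by a graded $R$-algebra automorphism of $R[x_0, \ldots, x_4, z]$, which in degree $1$ is a change of basis by a matrix in $\GL_5(R)$ and in degree $2$ replaces $z$ by $\alpha z + Q(x)$ with $\alpha \in R^\times$ and $Q$ quadratic in the $x_i$, together with a possible unit rescaling of $q_2$. Direct substitution shows that each such transformation multiplies $a$ by a unit, so $(a)$ is an invariant of the family $X|_U$. Consequently, the closed subschemes $V(a) \subset U$ glue along overlaps to a canonical closed subscheme $Z \subseteq T$ compatible with arbitrary base change, and this construction matches the universal property defining $\mathcal{H}$, giving $T \times_{\mathcal{F}} \mathcal{H} = Z$ (a closed immersion) and $T \times_{\mathcal{F}} \mathcal{Q} = T \setminus Z$ (an open immersion). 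For the $k$-point decomposition, when $R = k$ is a field with $2 \in k^\times$, the element $a$ is either $0$ or a unit, so $\mathcal{F}(k) = \mathcal{Q}(k) \sqcup \mathcal{H}(k)$, in accordance with Iskovskikh's classification recalled in the introduction.

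The main obstacle in this plan is verifying intrinsicness of $(a)$, which hinges on a complete description of the graded $R$-algebra automorphisms of $R[x_0,\ldots,x_4,z]$ with the indicated weights; once this is in place, everything else is a formal consequence of the local model. Note that Theorem \ref{t:Main result ii} appears as the set-theoretic shadow of the conclusion, but the stack-theoretic statement proved here is strictly stronger and demands the scheme-structure analysis just outlined.
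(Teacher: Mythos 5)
Your overall strategy coincides with the paper's: reduce to the Zariski-local model of Theorem \ref{t:local_presentation_for_Fanos}, write $q_2 = az + c(x)$, cut out the hyperelliptic locus by $V(a)$ and the quartic locus by $D(a)$, and glue. The gap is exactly at the step you yourself flag as ``the main obstacle'': the assertion that any two local presentations are related by a graded $R$-algebra automorphism of $R[x_0,\dots,x_4,z]$ sending $z\mapsto \alpha z + Q(x)$ with $\alpha\in R^\times$, so that $(a)$ transforms by a unit. What an isomorphism of polarized families actually gives you is an isomorphism of the graded section rings $\bigoplus_i \Homology^0(X,\omega^{-i})$, i.e.\ of the \emph{quotient} rings $R[x,z]/(q_2,q_4)$; a lift of the image of $z$ to $R[x,z]_2$ is only well defined modulo $R\cdot \tilde q_2$, and when $\tilde a$ is invertible there are lifts whose $z$-coefficient is $0$ (e.g.\ $z\mapsto -\tilde a^{-1}\tilde c(x)$ induces the identity on the quotient but is not an automorphism of the polynomial ring). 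So the unit claim is not automatic, surjectivity on the quotient does not force it in general, and without it the computation $a\mapsto (\text{unit})\cdot a$ does not go through. Since this is precisely what makes $V(a)$ glue over overlaps, the proof as written is incomplete.

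The paper circumvents this by never comparing two arbitrary presentations. It defines $B_{hyp}=V(c)$ for one chosen presentation and proves a \emph{universal property} (Proposition/Definition \ref{p:hyperelliptic_locus}); the only comparison ever needed is between an arbitrary presentation and a genuinely hyperelliptic one (Lemma \ref{L:definition_does_not_depend_on_cover}). In that special case the degree-two relation $q_2''$ of the target has no $z$-term, so projection onto the $z$-coefficient descends to the degree-two part of the quotient ring, and surjectivity of the induced map really does force the coefficient of $z$ in $\alpha(z)$ to be a unit; gluing and independence of the presentation then follow formally from uniqueness. If you want to keep your direct route, replace the automorphism argument by an intrinsic characterization of the ideal: in the local model the cokernel of the multiplication map $\Sym^2 f_*\omega_{X/B}^{-1}\to f_*\omega_{X/B}^{-2}$ is isomorphic to $R/(a)$, so $(a)$ is its annihilator; this is manifestly independent of the presentation and compatible with base change, and yields both the gluing and the universal property. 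The remaining parts of your argument (the fibrewise dichotomy over a field, the double-cover structure over $V(a)$ after completing the square, and the quartic structure over $D(a)$) are correct and match the paper.
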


Theorem \ref{t:Shaferevic for Fanos} is a consequence of the fact that $\mathcal{F}$ is an "arithmetically hyperbolic" stack.  Here we follow \cite{JLChevalleyWeil2020} and say that a finitely presented algebraic stack $X$ over an algebraically closed field  $k$ of characteristic $0$ is   \emph{arithmetically hyperbolic over $k$} 
if there is a $\ZZ$-finitely generated subring $A \sbe k$ and a (finitely presented) model $\mathcal{X}$ for $X$ over $A$ such that, for any $\ZZ$-finitely generated subring $A'\sbe k$ containing $A$, the set 
$$ \im \left(\pi_0(\Xc(A')) \to \pi_0(\Xc(k))\right)$$ 
is finite.

We will show that the stack $\mathcal{F}$ is  arithmetically hyperbolic over any algebraically closed field of characteristic zero.  We follow the terminology of \cite{Javanpeykar2022} and say that a finitely presented algebraic stack $X$ over an algebraically closed field  $k$ of characteristic $0$ is   \emph{absolutely arithmetically hyperbolic} if $ X_L$ is arithmetically hyperbolic over $L$ for every algebraically closed field extension $ L \spe k $.

 For an arbitrary stack, 
there should be no difference between being arithmetically hyperbolic over $\overline{\mathbb{Q}}$ and being absolutely arithmetically hyperbolic, i.e.,   arithmetic hyperbolicity  should persist over all field extensions. This is formalized by the following conjecture for stacks alluded to in \cite[Remark~4.13]{JLChevalleyWeil2020}.

\begin{Conj}[Persistence Conjecture]
	Let $k$ be an algebraically closed field of characteristic zero, and let $X$ be a finitely presented algebraic stack over $k$.
	If $X$ is arithmetically hyperbolic over $k$, then $X$ is absolutely arithmetically hyperbolic. 
\end{Conj}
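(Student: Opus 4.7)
The plan is to attempt a spreading-out and specialization argument. First, I would reduce to the case where $L$ is the algebraic closure of a finitely generated purely transcendental extension $k(t_1,\dots,t_n)$. Any algebraically closed extension $L \spe k$ is the filtered union of such subfields, every $\ZZ$-finitely generated subring $A' \sbe L$ lies in one of them, and since $\mathcal{X}$ is of finite presentation the equivalence relation defining $\pi_0(\mathcal{X}(L))$ is witnessed by data over a finitely generated extension; thus the general case follows from the case of such $L/k$.

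For $L = \overline{k(t_1,\dots,t_n)}$, a $\ZZ$-finitely generated $A' \sbe L$, and an $A'$-point $\xi \colon \Spec(A') \to \mathcal{X}$, the next step is to spread $\xi$ out. After enlarging $A'$ inside $L$, one extends $\xi$ to a family $\tilde\xi \colon U \to \mathcal{X}$ over a smooth affine $k$-scheme $U$ whose function field contains $\mathrm{Frac}(A')$. Since $k$ is algebraically closed, $U(k)$ is Zariski dense in $U$, and for each $u \in U(k)$ the specialization $\tilde\xi(u) \in \mathcal{X}(k)$ is defined over some $\ZZ$-finitely generated subring of $k$. By arithmetic hyperbolicity of $\mathcal{X}/k$, the classes $[\tilde\xi(u)] \in \pi_0(\mathcal{X}(k))$ fill out only a finite subset as $u$, $A'$, and $\xi$ vary.

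The hard step is to pass from this finiteness downstairs to finiteness of $\im(\pi_0(\mathcal{X}(A')) \to \pi_0(\mathcal{X}(L)))$ upstairs. Concretely, one must rule out infinitely many pairwise non-$L$-isomorphic $A'$-points specializing through a common family to a single $k$-point of $\mathcal{X}$. For a coarse moduli space this reduces to a rigidity statement: an infinite family all specializing to the same object over $k$-points of $U$ should be isotrivial, and hyperbolicity should rule out non-constant isotrivial families. For a general algebraic stack one must additionally control twists by automorphisms, which I expect to be the main obstacle. A plausible path would be to reduce to a Deligne--Mumford stack with finite inertia, pass to its coarse moduli space via rigidification, apply the rigidity argument there, and then bound the automorphism contribution separately using the finite-presentation hypothesis together with a Lang--Vojta-type input on torsors under finite group schemes.
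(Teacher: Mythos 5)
This statement is a \emph{conjecture} in the paper, not a theorem: the paper never claims a proof of it, and it is open even for varieties (it is the stacky form of \cite[Conjecture~1.1]{Javanpeykar2021}). What the paper does prove is a strictly conditional persistence result (Theorem \ref{t:geometric hyp implies algebraic hyp presists}): if $X$ is in addition \emph{geometrically} hyperbolic over $k$ (and is a finite type separated Deligne--Mumford stack), then arithmetic hyperbolicity persists, via an induction on transcendence degree (Lemmas \ref{l:arith hyp + geo hyp implie trdeg 1 extesions are arith hyp} and \ref{l:geo hyp implies arith hyp}) in which models of $L$-points over smooth arithmetic curves are controlled by evaluating at a section and using the finiteness of pointed maps from a curve. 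Your proposal, by contrast, attempts the unconditional statement, and it has two genuine gaps.

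First, the specialization step misuses the definition of arithmetic hyperbolicity. The definition gives finiteness of $\im(\pi_0(\mathcal{X}(A''))\to\pi_0(\mathcal{X}(k)))$ for each \emph{fixed} $\ZZ$-finitely generated $A''\sbe k$; it does not give finiteness of the union over all such $A''$, and the points $\tilde\xi(u)$ for $u\in U(k)$ are not defined over any single finitely generated subring. Concretely, take $X$ a smooth affine hyperbolic curve over $\ol{\QQ}$, $U=X$ and $\tilde\xi=\id$: then $\{[\tilde\xi(u)] : u\in U(k)\}$ is all of $X(\ol{\QQ})$, which is infinite, even though $X$ is arithmetically hyperbolic. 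So the claim that the classes $[\tilde\xi(u)]$ fill out a finite set is false in general. Second, the step you yourself flag as hard --- ruling out infinitely many pairwise non-isomorphic $A'$-points lying in a common family over $U$ --- is precisely the content of the conjecture, and the rigidity you need there (non-constant families over a curve through a fixed fibre are finite in number) is essentially the definition of geometric hyperbolicity, which is an independent hypothesis not implied by arithmetic hyperbolicity by any known argument. Your sketch of a ``plausible path'' via coarse spaces and rigidification does not close this gap. If you add geometric hyperbolicity as a hypothesis, your outline can be made to converge with the paper's actual (conditional) argument; without it, the proof does not go through.
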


This is a stacky version of \cite[Conjecture 1.1]{Javanpeykar2021} (see also \cite[Conjecture~17.5]{JBook}). The conjecture  says, in particular, that the finiteness of rational points over number fields on a projective variety over $\mathbb{Q}$ should imply the finiteness over all finitely generated fields; this was formulated as a precise question by Lang in \cite[p.~202]{Lang1986}.  
The main result of this paper is that   $\Fc$ is absolutely arithmetically hyperbolic.

\begin{Th}[Main result]\label{t:Fano stack is arithmetically hyperbolic}
	The stack $ \mathcal{F} $ of Fano threefolds of type $(1,1,4)$ is absolutely arithmetically hyperbolic.
\end{Th}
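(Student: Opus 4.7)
The plan is to use the stratification $\mathcal{F} = \mathcal{Q} \sqcup \mathcal{H}$ of Theorem \ref{t:stratification of Fano stack} to reduce Theorem \ref{t:Fano stack is arithmetically hyperbolic} to showing that each stratum is absolutely arithmetically hyperbolic. The general principle I would invoke, standard in this circle of ideas (cf.\ \cite{JLChevalleyWeil2020, Javanpeykar2022}), is that absolute arithmetic hyperbolicity of a finitely presented algebraic stack over an algebraically closed field of characteristic zero is inherited from a decomposition into an open substack and its closed complement, provided each piece is itself absolutely arithmetically hyperbolic. Since Theorem \ref{t:stratification of Fano stack} places us in exactly that situation, it suffices to treat $\mathcal{Q}$ and $\mathcal{H}$ separately.

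For the open stratum $\mathcal{Q}$ of smooth quartic threefolds in $\mathbb{P}^4$, my approach would be to use the intermediate Jacobian. A smooth quartic threefold $X$ satisfies $h^{3,0}(X) = 0$ and $h^{2,1}(X) = 30$, so $J(X)$ is a principally polarized abelian variety of dimension $30$. A global Torelli-type theorem for such threefolds would then yield a quasi-finite morphism of algebraic stacks $\tau_{\mathcal{Q}}\colon \mathcal{Q} \to \mathcal{A}_{30}$, at least after spreading out over a dense open of $\Spec \mathbb{Z}[1/2]$. Since $\mathcal{A}_{30}$ is absolutely arithmetically hyperbolic by Faltings' theorem together with the known persistence for moduli of abelian varieties, and since absolute arithmetic hyperbolicity passes to the source of any quasi-finite morphism, $\mathcal{Q}$ is absolutely arithmetically hyperbolic.

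For the closed stratum $\mathcal{H}$ of hyperelliptic Fano threefolds of type $(1,1,4)$, each $X$ is a double cover of a smooth quadric $Q \subset \mathbb{P}^4$ branched along a smooth surface in $|4H_Q|$. The explicit presentation from Theorem \ref{t:local_presentation_for_Fanos} — in which $q_2$ cuts out the quadric and $q_4$ the branch locus — realises $\mathcal{H}$ naturally as a stack of $2$-cyclic covers of smooth quadrics. I would then invoke the absolute arithmetic hyperbolicity results for moduli of cyclic covers of \cite{JavanpeykarLoughranMathur}, which apply to precisely this setup; as a back-up, one can argue via the intermediate Jacobian $J(X)$, combined with a Torelli-type statement, to produce a quasi-finite morphism to some $\mathcal{A}_g$ and conclude as in the previous paragraph.

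The main obstacle, I expect, will be upgrading these Torelli-type or cyclic-cover comparisons from pointwise statements on $k$-points to actual quasi-finite morphisms of algebraic stacks over a dense open of $\Spec \mathbb{Z}[1/2]$, since absolute arithmetic hyperbolicity is a statement about integral points on finitely presented $\mathbb{Z}$-models and not merely about geometric fibres. This is precisely where Theorem \ref{t:local_presentation_for_Fanos} is indispensable: by providing, Zariski-locally on the base, an explicit weighted complete intersection model $V_+(q_2,q_4) \subset \mathbb{P}(1,1,1,1,1,2)$ for every Fano threefold of type $(1,1,4)$, it turns both the intermediate Jacobian of a smooth quartic and the cyclic-cover structure of a hyperelliptic threefold into manifestly functorial constructions over $\mathbb{Z}[1/2]$, to which the cited arithmetic hyperbolicity results for $\mathcal{A}_g$ and for moduli of cyclic covers can be applied directly.
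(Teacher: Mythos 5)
Your overall strategy (send $\mathcal{F}$ to $\mathcal{A}_{30}$ via the intermediate Jacobian and inherit hyperbolicity from Faltings) is the right one, but the route you propose has a genuine gap at its core: the period map $p\colon (\mathcal{F}_\CC)^{an}\to(\mathcal{A}_{30,\CC})^{an}$ is only a \emph{complex-analytic} morphism of stacks. For these Fanos ($h^{3,0}=0$) there is no algebraic construction of $J(X)$ over $\ZZ[1/2]$, and no global Torelli theorem is invoked or needed; the paper uses \emph{infinitesimal} Torelli to show $p$ is quasi-finite (Theorem \ref{t:period map is q finite}) and then applies \cite[Theorem~6.4]{JLChevalleyWeil2020}, which is designed precisely to transfer arithmetic hyperbolicity over $\CC$ along a quasi-finite analytic period map. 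Because $p$ exists only over $\CC$, absoluteness cannot be obtained by base-changing the period map to a larger field $L\supseteq\CC$: the paper instead pulls back \emph{geometric} hyperbolicity from $\mathcal{A}_{30,\CC}$ along the quasi-finite $p$ (Lemma \ref{l:q finite preimage of geo hyp is geo hyp}, \cite[Theorem~1.7]{JLIntegralPoints2019}) and then invokes the persistence theorem (Theorem \ref{t:geometric hyp implies algebraic hyp presists}). Your proposal omits this geometric-hyperbolicity/persistence step entirely, and your closing worry about upgrading to quasi-finite algebraic morphisms over a dense open of $\Spec\ZZ[1/2]$ points in the wrong direction --- that upgrade is neither available nor required.

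Two further problems. First, the ``decomposition principle'' you treat as a black box (open plus closed complement, each absolutely arithmetically hyperbolic, implies the whole is) is not clean: an $A'$-point of $\mathcal{F}$ whose generic fibre lies in $\mathcal{Q}$ only yields a point of $\mathcal{Q}$ over a localization $A'_f$ with $f$ depending on the point, so the resulting images in $\pi_0(\mathcal{F}(k))$ form an a priori infinite union of finite sets. The paper sidesteps this by using the stratification of Theorem \ref{t:stratification of Fano stack} only to verify quasi-finiteness of the single map $p$ (the fibre of $p$ over a point splits into its finite intersections with $\mathcal{Q}$ and $\mathcal{H}$) and then transferring hyperbolicity to all of $\mathcal{F}$ in one step. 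Second, for $\mathcal{H}$ the cyclic-covers results of \cite{JavanpeykarLoughranMathur} would require arithmetic hyperbolicity of the moduli of the branch data --- smooth $(2,4)$ complete intersection surfaces in $\PP^4$ --- which is not known (these are not of Hodge level $\leq 1$). The ingredient actually needed is Theorem \ref{t:deformations of hyperelliptic Fanos}, identifying $T_{[X]}\mathcal{H}$ with $\Homology^1(X,T_X)^\iota$, combined with the equivariant infinitesimal Torelli theorem of \cite[Theorem~1.3]{LichtTorelli2022}, which gives injectivity of $dp$ precisely on that invariant subspace; this is why the hyperelliptic stratum requires the separate deformation-theoretic treatment rather than a citation to cyclic-cover machinery.
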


The proof of this result uses the fact that the intermediate Jacobian defines a morphism of stacks
$$
	p\colon  (\mathcal{F}_\CC)^{an} \to (\Ac_{30,\CC})^{an}, 
$$
where $\Ac_{30}$ is the stack of 30-dimensional principally polarized abelian schemes.
We show that this period map is quasi-finite  by using  the stratification of $\mathcal{F}$ (Theorem \ref{t:stratification of Fano stack}) and invoking our results on the infinitesimal Torelli property for Fano threefolds of type (1,1,4) \cite[Theorem~1.3]{LichtTorelli2022}. 
The absolute arithmetic hyperbolicity of $ \mathcal{F} $ will then follow from the absolute arithmetic hyperbolicity of $ \Ac_{30} $.

\subsection*{Acknowledgements}

I would like to thank Ariyan Javanpeykar. He introduced me to Shafarevich conjecture and the moduli of Fano-threefolds. I am very grateful for many inspiring discussion and his help in writing this article. I gratefully acknowledge the support of
SFB/Transregio 45.

\section{The stack of Fano threefolds}\label{s:stack of Fano threefolds}

Following \cite[Section~2-3]{JLo18}, we introduce the stack of Fano threefolds. 
\begin{Def}\label{d:Fano variety}
	Let $k$ be a field. A \emph{Fano variety over $k$} is a smooth proper geometrically integral variety $S$ over $k$ such that its anticanonical bundle is ample. 
	
	Let $B$ be a scheme. A \emph{Fano scheme over $B$} (or \emph{family of Fano varieties over $B$}) is a smooth proper morphism $X \to B $ of schemes whose fibres are Fano varieties. A \emph{Fano $n$-fold over B}  is a Fano scheme of relative dimension $n$. 
\end{Def}

\begin{Def}\label{d:good reduction} Let $B$ be a Dedekind scheme with function field $K$.  A Fano variety  $X$ over $ K $ has \emph{good reduction over $B$} if there is a Fano scheme $ \Xm \to B$ and an isomorphism $ \Xm_{K} \cong X $.   
\end{Def}

\begin{Rem}
	Let $f\colon X \to B$ be a smooth proper morphism with geometrically connected fibres of relative dimension $n$. Then $ \omega_{X/B} = \wedge^n \Omega_{X/B} $ is a line bundle. Since $f$ is proper, the line bundle $ \omega_{X/B}^{-1} $ is relatively ample if and only if it is fibre-wise ample; see \cite[Theorem~4.7.1]{Gro61}. Hence $ X \to B$ is a Fano  scheme if and only if $ \omega_{X/B}\inv $ is relatively ample.
\end{Rem}

\begin{Def}\label{d:type}
	Let $k$ be an algebraically closed field and let $ X $ be a Fano threefold over $k$. Then we define: \begin{enumerate}
		\item The \emph{Picard rank of $X$} is $ \rho(X) = \mathrm{rank}_\ZZ \,\Pic\, X$.
		\item The \emph{index of $X$} is $r(X) = \max\{ m\in \NN \, \vert \, \omega_X^{-1}/m \in \Pic(X) \}. $
		\item The \emph{degree of $X$} is the triple intersection number $ d(X) = (\omega_X^{-1}/r(X))^3$.
		\item The \emph{type of $X$} is the triple $ (\rho(X),r(X),d(X))$.
	\end{enumerate}
\end{Def}

\begin{Def}
	We define a fibred category $ p\colon  \mathrm{Fano} \to \mathrm{Sch} $, where for a scheme $B$, the objects of $ \mathrm{Fano}(B)$ are Fano threefolds over $B$. 
	A morphism 
	$
	( f\colon X \to B) \to (f\pr\colon X\pr \to B\pr)
	$   
	of two Fano three  in $\mathrm{Fano}$ is given by a pair $ (g,h)$, where $g\colon B \to B\pr $ and $ h\colon X \to X' $ are morphisms of schemes such that the square 
	$$
	\begin{tikzcd}
		X \ar[r,"h"] \ar[d,"f"] & X' \ar[d,"f'"] \\
		B \ar[r,"g"] & B' 
	\end{tikzcd}
	$$
	is cartesian. The functor $p$ is the forgetful functor, that remembers only the base scheme.
	
	Given a triple of positive integers $(\rho,r,d) \in \NN^3$, we define $ \mathrm{Fano}_{\rho,r,d} \to \mathrm{Sch} $ to be the full fibred subcategory of Fano threefolds $f\colon X \to B$ such that all geometric fibres of $f$ are Fano varieties of Picard rank $\rho$, index $r$ and degree $d$. 
	We define $\mathcal{F} $ to be the fibred category  $\left(\mathrm{Fano}_{1,1,4}\right)_{\ZZ[1/2]} $.
\end{Def}

\begin{Prop}\quad \label{P:facts about the stack of Fanos}
	\begin{enumerate}
		\item The fibred category $\mathcal{F}$ is a finite type algebraic stack with an affine diagonal  over $ \ZZ[1/2] $.
		\item The stack $ \mathcal{F}_\QQ $ is smooth over $\QQ$.
		\item There is an $N\in \NN $ such that $ \mathcal{F}_{\ZZ[1/N]} $ is separated over $ \ZZ[1/N] $.
		\item There is an $N\in \NN $ such that for all schemes $ B $ over $ \ZZ[1/N] $ and $ X,Y \in \mathcal{F}(B) $, the morphism $\mathrm{Isom}_B(X,Y) \to B$ is finite.
		\item The stack $ \mathcal{F}_{\QQ} $ is a Deligne-Mumford stack.
	\end{enumerate}
	
\end{Prop}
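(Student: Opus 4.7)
The plan is to follow the strategy of \cite[Sections~2--3]{JLo18}, where analogous statements are established for other types of Fano threefolds, and to invoke Theorem \ref{t:local_presentation_for_Fanos} as the key new geometric input.

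For (1), a Fano threefold of type $(1,1,4)$ carries the canonical ample polarization $\omega_X^{-1}$ (the index being $1$), so $\Fc$ maps to the stack $\calM_P$ of smooth proper polarized varieties with Hilbert polynomial $P$ determined by the type. The stack $\calM_P$ is known to be a locally finite type algebraic stack with affine diagonal over $\ZZ[1/2]$. Theorem \ref{t:local_presentation_for_Fanos} supplies boundedness: every fiber is Zariski locally a weighted complete intersection $V_+(q_2,q_4)\subseteq \PP(1,1,1,1,1,2)$, which forces the Hilbert polynomial to be constant and shows $\Fc$ is of finite type. Being Fano of the prescribed type is an open condition in $\calM_P$ (smoothness is open, and in a smooth proper family with $h^1(\mathcal{O})=0$ the Picard rank and the $\omega^{-1}$-polarization condition are also open), which yields (1).

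For (2), I would combine Theorem \ref{t:local_presentation_for_Fanos} with standard deformation theory: Zariski locally on $\Fc_\QQ$, one obtains a smooth cover by the open subscheme $U$ of the affine space parameterizing pairs $(q_2,q_4)$ in $\QQ[x_0,\ldots,x_4,z]$ of weighted degrees $2$ and $4$ that define a smooth Fano threefold of the prescribed type, with the automorphism group scheme of $\PP_\QQ(1,1,1,1,1,2)$ acting smoothly on $U$. Since $U$ is an open subscheme of an affine space and the acting group is smooth, the resulting quotient stack is smooth over $\QQ$, which gives (2).

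For (3)--(5), the intrinsicness of $\omega_X^{-1}$ means that any morphism in $\Fc$ respects the anticanonical polarization, so $\mathrm{Isom}_B(X,Y)$ embeds as a closed subscheme of the relative Hilbert scheme of graphs in $X\times_B Y$, which is proper over $B$. The fibers of $\mathrm{Isom}_B(X,Y)\to B$ are finite because an automorphism of a Picard rank $1$ Fano threefold $X_b$ acts faithfully on the linear system $|\omega_{X_b}^{-1}|$, embedding $\Aut(X_b)$ as a subgroup of $\PGL$ with finite stabilizer at the subscheme $X_b$ of projective space. Proper plus quasi-finite yields (4) over $\ZZ[1/N]$ for a suitable $N$, chosen large enough that the automorphism group schemes in the universal family are étale (via generic flatness applied to the universal $\Aut$-scheme). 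The diagonal of $\Fc_\QQ$ is then unramified with finite fibers, so $\Fc_\QQ$ is Deligne--Mumford, giving (5). Separatedness (3) follows from the valuative criterion: uniqueness of the anticanonically polarized Fano extension over a DVR (an instance of Matsusaka--Mumford) produces the required isomorphism of special fibers for any two extensions that agree on the generic fiber.

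The main obstacle is the uniformity in $N$ needed for (3) and (4): one must exclude the finitely many residual characteristics in which automorphism group schemes could become non-reduced or where the Matsusaka--Mumford uniqueness argument fails. Given the finite type of $\Fc$ over $\ZZ[1/2]$ this is a standard spreading-out argument, but it is the only non-formal step; everything else reduces cleanly to Theorem \ref{t:local_presentation_for_Fanos} and the canonical nature of the anticanonical polarization.
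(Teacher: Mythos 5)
The paper's own proof of this proposition is a one-line citation: (1)--(4) are \cite[Lemmas~3.5--3.8]{JLo18} applied to $\mathrm{Fano}_{1,1,4}$, and (5) is deduced formally from finite type, separatedness and affine diagonal in characteristic zero. Your proposal reconstructs the arguments instead, and while the overall strategy (polarized moduli, Matsusaka--Mumford, finiteness of automorphisms) is the right one, two key steps fail as stated on exactly the hyperelliptic half of the moduli space. First, your finiteness argument for (4) rests on the claim that $\Aut(X_b)$ acts faithfully on $|\omega_{X_b}^{-1}|$. For a hyperelliptic member $X_b = V_+(q_2, q_4 - z^2) \subseteq \PP_k(1,1,1,1,1,2)$ the anticanonical system is not very ample: the anticanonical map is the $2{:}1$ cover of the quadric $V_+(q_2)\subseteq\PP^4$, and the double-cover involution $z \mapsto -z$ acts trivially on $\Homology^0(X_b,\omega_{X_b}^{-1}) = \langle x_0,\dots,x_4\rangle$. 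So $\Aut(X_b) \to \PGL_5$ has nontrivial kernel and the asserted embedding into a stabilizer in $\PGL$ does not exist; one must use the full anticanonical ring, i.e.\ the embedding into $\PP(1,1,1,1,1,2)$, to get a faithful (and then finite) action. Second, Matsusaka--Mumford requires the special fibre to be \emph{non-ruled}; Fano threefolds are uniruled, and non-ruledness of smooth quartic threefolds and of degree-$8$-branched double quadrics is a genuine extra input (it comes from birational rigidity in the style of Iskovskikh--Manin), not a consequence of carrying a canonical polarization. As written, your argument for (3) would "prove" separatedness of the stack of all Fano threefolds, which is false (consider $\PP^3$).

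Two smaller points. The openness of the condition ``Picard rank $1$'' in a smooth proper family is not immediate --- the geometric Picard rank can jump under specialization --- and the paper's source handles this via the local constancy of the type (\cite[Proposition~2.10]{JLo18}), which your parenthetical glosses over. For (2), invoking Theorem \ref{t:local_presentation_for_Fanos} is not circular (its proof does not use this proposition), but the quotient-stack presentation you sketch requires assembling the Zariski-local presentations into a global smooth atlas, which is additional work; the standard route is unobstructedness, namely $\Homology^2(X,T_X)=\Homology^2(X,\Omega^2_X\otimes\omega_X^{-1})=0$ by Akizuki--Nakano vanishing in characteristic zero, which gives smoothness of $\Fc_\QQ$ directly. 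Your derivation of (5) from unramifiedness of the diagonal in characteristic zero agrees with the paper's.
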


\begin{proof}
	Note that (1), (2), (3) and (4) follows from  \cite[Lemma~3.5, 3.6, 3.7, 3.8]{JLo18}.  Finally, (5) follows from the fact that a finite type separated algebraic stack with an affine diagonal over $\mathbb{Q}$ is Deligne-Mumford.
\end{proof}


\section{Fano threefolds of Picard number 1, index 1, degree 4 over fields}

\label{s:fano_threefolds_over_fields}
Fano threefolds over an algebraically closed field have been classified by Iskovskikh \cite[Table~3.5]{Isk80} in characteristic $0$. His result was later generalized by Shepherd-Barron \cite{ShB97} to positive characteristic in the case where the Picard rank is $1$. In this section, we will generalize the characterization for Fano threefolds of type $(1,1,4)$ to the case of non algebraically closed fields of characteristic not equal to $2$. In Section \ref{s:families of Fanos}, we will then generalize it to families of Fano threefolds.

\begin{Th}[Iskovskikh-Shepherd-Barron {\cite[Propositions~4.1+4.3]{ShB97}}] 
	Let $k$ be an algebraically closed field. Let $ X$ be a Fano threefold of type $(1,1,4)$ over $k$. 
	\begin{enumerate}[a)]
		\item Then $X$ is a smooth quartic in $ \PP^4_k$, or
		\item $X$ is a double cover of a smooth quadric in $\PP^4_k$ ramified along a smooth surface of degree $8$.
	\end{enumerate}
\end{Th}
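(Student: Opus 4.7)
The plan is to study the anticanonical morphism of $X$ and to show that it realizes $X$ either as a smooth quartic hypersurface in $\PP^4$ or as a finite double cover of a smooth quadric threefold in $\PP^4$.

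First I would compute $h^0(X,-K_X)$. By Kawamata--Viehweg vanishing (with Shepherd-Barron's replacement in positive characteristic), $H^i(X,-K_X) = 0$ for $i > 0$. Combining this with Riemann--Roch on threefolds and the Fano genus relation $(-K_X)^3 = 2g-2$, the assumption $(-K_X)^3 = 4$ yields $g = 3$ and hence $h^0(-K_X) = g+2 = 5$. So $|-K_X|$ defines a rational map $\varphi \colon X \dashrightarrow \PP^4$.

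Next I would show that $|-K_X|$ is base-point free, so that $\varphi$ is a morphism $X \to \PP^4$. In characteristic zero this follows from Shokurov's base-point-free theorem together with a cohomological analysis of a general member of $|-K_X|$; the positive characteristic analogue is the technical heart of \cite{ShB97}. Setting $Y := \varphi(X) \subseteq \PP^4$, $m := \deg \varphi$ and $d := \deg Y$, the ampleness of $-K_X$ and $\rho(X) = 1$ forbid $\varphi$ from contracting a divisor, and non-degeneracy of $Y$ in $\PP^4$ forces $d \geq 2$. The projection formula $md = (-K_X)^3 = 4$ then leaves exactly two possibilities: $(m,d) = (1,4)$ or $(m,d) = (2,2)$.

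In case $(m,d) = (1,4)$ the morphism $\varphi$ is birational and finite, hence identifies $X$ with a smooth quartic hypersurface in $\PP^4$. In case $(m,d) = (2,2)$, $Y$ is a non-degenerate quadric threefold; its smoothness is forced by $\rho(X) = 1$, since a singular quadric threefold is a cone whose vertex would pull back to a positive-dimensional locus incompatible with Picard rank one on $X$. From $K_Y = \calO_Y(-3)$ (adjunction in $\PP^4$) and $\varphi^*\calO_Y(1) = -K_X$, the Hurwitz formula $K_X = \varphi^*K_Y + R$ yields $R \in |\varphi^*\calO_Y(2)|$, so the branch divisor lies in $|\calO_Y(4)|$ and cuts out a surface of degree $8$ in $\PP^4$, necessarily smooth since $X$ is. The main obstacle is the base-point freeness step together with ruling out intermediate behaviour of $\varphi$; both depend essentially on $\cha(k) \neq 2$, which is also what ensures the degree two cover is tame and underlies the $\ZZ[1/2]$-hypothesis imposed throughout the paper.
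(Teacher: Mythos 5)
The paper does not actually prove this statement: it is quoted verbatim from Shepherd-Barron (and, in characteristic zero, Iskovskikh), with the citation built into the theorem header and no proof environment following it. So there is no internal argument to compare yours against. Judged on its own, your outline follows the standard route — compute $h^0(-K_X)=5$ from Riemann--Roch, vanishing and $(-K_X)^3=2g-2$, establish base-point freeness of $|-K_X|$, and split into $(m,d)=(1,4)$ or $(2,2)$ via $md=4$ for the finite anticanonical morphism onto a nondegenerate hypersurface in $\PP^4$. That skeleton is correct, and you rightly flag base-point freeness as the technical heart that you are deferring to \cite{ShB97}.

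Two of your concluding steps, however, do not work as written. First, in the case $(m,d)=(1,4)$, ``birational and finite, hence identifies $X$ with a smooth quartic'' is a non sequitur: a finite birational morphism is an isomorphism only onto a \emph{normal} target (think of the normalization of a nodal hypersurface). You still need to show that $Y$ is normal, equivalently that $-K_X$ is very ample for genus $3$; this is a genuine step in Iskovskikh's and Shepherd-Barron's analysis, not a formal consequence. Second, your argument for smoothness of the quadric is wrong: since $\varphi$ is finite, the vertex of a quadric cone pulls back to a \emph{finite} set, never a positive-dimensional locus, so no contradiction with $\rho(X)=1$ arises that way. The correct arguments are local or divisor-theoretic: either analyze the double cover over the vertex (if the branch divisor misses the vertex, $X$ acquires two cone singularities; if it passes through it, one checks $X$ is still singular there), or use that a rank-$4$ quadric cone carries two families of planes whose preimages force $\rho(X)\geq 2$. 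With those two points repaired the sketch matches the cited classification.
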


Let $k$ be a field with $2\in k^\times$. We consider the weighted projective space
$$
\PP_k(1,1,1,1,1,2) = \Proj k[x_0,\dots,x_4,z]
$$
with weights $ \deg(x_i) = 1 $ for $ i \in \{0,\dots,4\}$ and $ \deg(z) = 2$; see \cite{Dol82} for an introduction on weighted projective varieties. 
Varieties of both types a) and b) represent special cases of smooth weighted complete intersections of degree $(2,4)$.
Considering type a), let $$ X = V_+(q_4) \sbe \PP_k^4  = \Proj k[x_0,\dots,x_4] $$ be a smooth quartic. Then $X$ is isomorphic to the complete intersection 
$$
X' = V_+(z,q_4) \sbe \PP_k(1,1,1,1,1,2). 
$$
Considering type b), let $ X$ be a double cover of a smooth quadric $ V_+(q_2) \sbe \PP^4$ which is ramified along a smooth surface $V_+(q_2,q_4)$ of degree $8$. Then $X $ is isomorphic to 
$$
X' = V_+(q_2,q_4-z^2) \sbe \PP_k(1,1,1,1,1,2).
$$ 
The double cover map is given via projection onto the first $5$ homogeneous coordinates.
Note in both cases $ X' $ does not contain the point $Q=(0:0:0:0:0:1)$. In fact, the following more general statement holds.

\begin{Lemma} \label{l:smooth subvarieties don't contain Q}
	If $ Y \sbe  \PP_k(1,1,1,1,1,2) $ be a positive-dimensional smooth complete intersection, then $Y$ does not contain $Q$. 
\end{Lemma}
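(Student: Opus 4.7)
The plan is to exploit the fact that $Q$ is the unique singular point of $\PP_k(1,1,1,1,1,2)$, and that its Zariski tangent space is far larger than the Krull dimension of the ambient space at $Q$; this will force any complete intersection through $Q$ to have a tangent space too large to be smooth.

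First I would pin down the embedding dimension of $\PP_k(1,1,1,1,1,2)$ at $Q$. Working in the affine chart $U = \{z \neq 0\}$, the coordinate ring is the $\mu_2$-invariant subring $k[x_0,\dots,x_4]^{\mu_2}$ (with $\mu_2$ acting by $x_i \mapsto -x_i$), generated by $u_{ij} := x_i x_j / z$ for $0 \leq i \leq j \leq 4$ subject only to the Pl\"ucker-type quadratic relations $u_{ij}\, u_{kl} = u_{ik}\, u_{jl}$. Since all these relations lie in $\mathfrak{m}_Q^2$, the cotangent space $\mathfrak{m}_Q/\mathfrak{m}_Q^2$ has basis $\{u_{ij}\}$ of size $\binom{6}{2} = 15$, whereas $\dim \PP_k(1,1,1,1,1,2) = 5$. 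Geometrically, $U$ is the affine cone over the Veronese embedding $v_2(\PP^4) \subset \PP^{14}$, with $Q$ as the cone vertex.

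Next I would assume for contradiction that $Y$ is a positive-dimensional smooth complete intersection of codimension $r$ containing $Q$, so $0 < r \leq 4$. The ideal $I_{Y,Q} \subseteq \mathcal{O}_{\PP_k(1,1,1,1,1,2),\, Q}$ is generated by $r$ elements, hence its image in the $15$-dimensional cotangent space $\mathfrak{m}_Q/\mathfrak{m}_Q^2$ spans at most $r$ dimensions. Consequently
$$
\dim_k T_Q Y \;=\; \dim_k \mathfrak{m}_Q / (\mathfrak{m}_Q^2 + I_{Y,Q}) \;\geq\; 15 - r.
$$
Smoothness of $Y$ at $Q$ forces $\dim_k T_Q Y = \dim_Q Y = 5 - r$, which combined with the above gives the absurdity $5 - r \geq 15 - r$. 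This contradiction shows $Q \notin Y$.

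The main obstacle is the first step, verifying that the embedding dimension of $\PP_k(1,1,1,1,1,2)$ at $Q$ is indeed $15$: this is the technical heart of the argument and rests on the observation that all relations among the $u_{ij}$ are quadratic and therefore invisible in $\mathfrak{m}_Q/\mathfrak{m}_Q^2$. Once the gap between embedding dimension and Krull dimension at $Q$ is pinned down, the remainder is routine linear algebra on the Zariski cotangent space of $Y$ at $Q$; in particular one uses that for a complete intersection the local ideal at $Q$ is genuinely generated by $r = \codim Y$ elements (which is e.g.\ automatic when all defining equations have even weight, as for the Fano threefolds of interest).
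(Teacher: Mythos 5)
Your proposal is correct and follows essentially the same route as the paper: work in the affine chart $D_+(z)$, observe that the generators $x_ix_j/z$ satisfy only relations lying in $\mathfrak{m}_Q^2$ so that the embedding dimension at $Q$ is $15$, and conclude that a smooth complete intersection of codimension $r$ through $Q$ would need a tangent space of dimension $5-r$ while it is forced to be at least $15-r$. You have merely made explicit the dimension count that the paper compresses into the phrase ``it is not possible for $U\cap Y$ to satisfy the Jacobi criterion,'' and your parenthetical caveat about the local ideal being $r$-generated (automatic for even weights) is a reasonable point the paper leaves implicit.
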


\begin{proof}
	The point $Q$ is contained in the open affine neighbourhood 
	$$ U = D_+(z) = \Spec k \left[ \frac{x_i x_j}{z} \middle\vert 0 \leq i \leq j \leq 4 \right] \sbe \PP_k(1,1,1,1,1,2). $$
	The generators $ w_{i,j} = \frac{x_i x_j}{z} $ satisfy relations $$ q_{\alpha,\sigma} = w_{\alpha_1,\alpha_2}w_{\alpha_3,\alpha_4} -w_{ \alpha_{\sigma 1},\alpha_{\sigma 2}}w_{\alpha_{\sigma 3},\alpha_{\sigma 4}},$$
	where $ \alpha \in  \{ 0,\dots, 4\}^4 $ with 
	$\alpha_1 \leq \alpha_2$ and $\alpha_3 \leq \alpha_4$ and $ \sigma $ is a permutation of $ \{1,2,3,4 \}$ 
	with $  \alpha_{\sigma 1} \leq \alpha_{\sigma 2}$ 
	and $\alpha_{\sigma 3} \leq \alpha_{\sigma 4}$. 
	We see $ U \cong \Spec k[w_{i,j}]/(q_{\alpha,\sigma})$. The point  $Q$ corresponds to the point with coordinates $ w_{i,j} = 0$. Note that $ \frac{\partial q_{\alpha,\sigma}}{\partial w_{i,j}}(Q) = 0$ for all $i,j,\alpha,\sigma$.
	Therefore it is not possible for $ U \cap Y $  to satisfy the Jacobi criterion if $Y$ does contain $Q$.
\end{proof}

In the following, we will find an explicit description for the anti-canonical bundle of the weighted complete intersection $X'$.
By {\cite[Theorem~3.3.4]{Dol82}},
there is an isomorphism 
\begin{equation}\label{e:canonical bundle}
	\omega_{X'/k} \cong \Oc_{X'}(-1). 
\end{equation}
Note that for a closed subvariety $ Y \sbe \PP_k(W_1,\dots,W_r) $ with weighted coordinate ring $A$, the sheaf $ \Oc_Y(m) $ is the graded $ \OY$-module associated to the degree shifted graded module $ A(m) $.
In general, it is not true that $\OY(m)$ is a line bundle or that $ \Oc_Y(m) \otimes \Oc_Y(l) \cong \OY(m+l) $; see \cite[Section~1.5]{Dol82}.
For example, if $ P = \PP_k(1,1,1,1,1,2) $ as above, then $ \Oc_P(1) \otimes \Oc_P(1) \not \cong \Oc_P(2) $. 
This is because in any neighbourhood of $Q = (0:0:0:0:0:1)$, the section $z$ of $\Oc_P(2)$
is not a product of sections of $\Oc_P(1) $. However, this is the only problematic point. 

\begin{Lemma}\label{l:O(l) x O(m) = O(l+m)}
	Let $k$ be a field, let $ X \sbe \PP_k(1,1,1,1,1,2) $  be a closed subvariety that does not contain the point $Q = (0:0:0:0:0:1)$. Then for any $ m,l\in \ZZ $, the sheaf $ \OX(m) $ is a line bundle and the multiplication map induces an isomorphism  $$ \OX(m) \otimes \OX(l) \cong \OX(m+l).$$
\end{Lemma}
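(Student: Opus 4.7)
The plan is to exploit the fact that the only point of $\PP_k(1,1,1,1,1,2)$ avoided by all the weight-one coordinates is $Q$. More precisely, I would first observe that $Q = (0:0:0:0:0:1)$ is the unique point where $x_0=\cdots=x_4=0$, so the open cover
$$\PP_k(1,1,1,1,1,2) \setminus \{Q\} \;=\; D_+(x_0) \cup D_+(x_1) \cup \cdots \cup D_+(x_4).$$
Since by hypothesis $Q \notin X$, the closed subvariety $X$ is already covered by the five affine opens $X \cap D_+(x_i)$ for $i=0,\dots,4$. This is the crucial reduction: the problematic weight-$2$ chart $D_+(z)$ can be discarded.

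Next, I would check triviality of $\Oc_X(m)$ on each piece of this cover. On $D_+(x_i)$, sections of $\Oc(m)$ are by construction the degree-zero elements of $S(m)_{x_i}$, where $S = k[x_0,\dots,x_4,z]$ with the weighted grading. Since $\deg(x_i)=1$ and $x_i$ is invertible on $D_+(x_i)$, any homogeneous element of $S(m)_{x_i}$ of degree zero is uniquely of the form $x_i^m \cdot (g/x_i^{m+k})$ with $g/x_i^{m+k} \in S_{(x_i)}$. Hence $\Oc(m)\vert_{D_+(x_i)}$ is free of rank one with generator $x_i^m$, and restriction to $X$ gives that $\Oc_X(m)\vert_{X\cap D_+(x_i)}$ is likewise free with generator $x_i^m$. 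In particular $\Oc_X(m)$ is a line bundle.

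For the second assertion, I would note that the multiplication map $\Oc(m)\otimes \Oc(l) \to \Oc(m+l)$ is induced by the graded multiplication $S(m) \otimes_S S(l) \to S(m+l)$. On $D_+(x_i)$ it sends the trivializing section $x_i^m \otimes x_i^l$ to $x_i^{m+l}$, i.e.\ it carries a generator of the source to a generator of the target, and is therefore an isomorphism on each chart of the cover. Restricting to $X$ and gluing over the covering $\{X\cap D_+(x_i)\}_{i=0,\dots,4}$ yields the desired isomorphism $\Oc_X(m)\otimes \Oc_X(l) \cong \Oc_X(m+l)$.

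There is no real obstacle here; the only subtle point — and the whole reason the lemma is stated — is the initial observation that avoiding $Q$ reduces everything to the weight-one charts, where the usual proof that $\Oc(1)$ is invertible on ordinary projective space applies verbatim. The discussion after Lemma~\ref{l:smooth subvarieties don't contain Q} already signals that $Q$ is the unique obstruction to $\Oc(1)\otimes \Oc(1)\cong \Oc(2)$ globally, so the argument consists in recording this localization carefully.
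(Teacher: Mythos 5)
Your proof is correct and follows essentially the same route as the paper: cover $X$ by the weight-one charts $D_+(x_i)$ (possible precisely because $Q\notin X$), observe that multiplication by $x_i^m$ trivializes $\Oc_X(m)$ on each chart, and conclude that the multiplication map is an isomorphism of line bundles. Your write-up simply makes explicit the local computation that the paper leaves as "the assertion follows."
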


\begin{proof}
	We have $ X\sbe \PP_k(1,1,1,1,1,2) \setminus \{Q\} = \bigcup D_+(x_i)$, where  
	$$ D_+(x_i) = \Spec k \left[ \frac{x_0}{x_i}, \dots, \frac{x_4}{x_i}, \frac{z}{x_i^2} \right].$$
	Let $ U_i = X \cap D_+(x_i) $. The assertion follows, since the multiplication map 
	$$ 
	\OX\vert_{U_i} \xrightarrow{\cdot x_i^m} \OX(m)\vert_{U_i} 
	$$
	is an isomorphism.  
\end{proof}

By Lemma \ref{l:smooth subvarieties don't contain Q} and Lemma \ref{l:O(l) x O(m) = O(l+m)}, the Isomorphism (\ref{e:canonical bundle}) induces an isomorphism
\begin{equation}\label{e:anti-canonical bundle}
	\omega_{X'/k}^{-i} \cong \Oc_{X'}(i)
\end{equation}
for all $i \in \ZZ$.
The proof of the following proposition will utilize the fact that our Fano threefolds come with a canonical embedding into weighted projective space associated to the anticanonical bundle.

\begin{Prop}\label{p:classifying_Fanos_over_nonclosed_field}
	Let $k$ be a field with $2\in k^\times$, let $X$ be a Fano threefold of type $(1,1,4)$ over $k$.
	Then the function  $\phi(i) = \dim_k \Homology^0(X,\omega_{X/k}^{-i} )  $ satisfies $ \phi(1) = 5, \phi(2)=15, \phi(3) = 35$ and $ \phi(4) = 69$.  
	Let $ \xi_0,\dots,\xi_4 $ be a basis for $\Homology^0(X,\omega_{X/k}^{-1} )$  . Then the following statements hold.
	\begin{enumerate}
		\item If $X_{\bar{k}} $ is of type a), then the variety $X$ is a smooth quartic in $ \PP^4_k$. The monomials of degree $4$, $ \xi_0^4,\xi_0^3\xi_1,\dots,\xi_4^4 $ satisfy a relation $ q_4 $. The map $ \xi_i \mapsto x_i$
		induces an isomorphism of graded $k$-algebras $$ \bigoplus_{i\geq 0} \Homology^0(X, \omega_{X/k}^{-i} ) \cong k[x_0,\dots,x_4]/(q_4) $$
		and an isomorphism
		$
		X \cong V_+(q_4) \sbe \PP_k^4
		$
		of varieties. 
		\item If $X_{\bar{k}} $ is of type b), then the variety $X$ is a double cover of a smooth quadric in $\PP^4_k$ ramified along a smooth surface of degree $8$. The monomials of degree $2$, $ \xi_0^2,\xi_0\xi_1,\dots,\xi_4^2 $ satisfy a relation $ q_2 $ and span a 14-dimensional subspace of $ \Homology^0(X, \omega_{X/k}^{-2} ) $. Let $ \zeta \in \Homology^0(X, \omega_{X/k}^{-2} ) $ be an element completing those monomials to a generating set. 
		There are polynomials $ q_2 \in k[x_0,\dots, x_4]_2$ and $ q_4 \in k[x_0,\dots, x_4,z]_4 $  
		such that the map $ \xi_i \mapsto x_i, \zeta \mapsto z $ induces an isomorphism if graded $k$-algebras 
		$$ \bigoplus_{i\geq 0} \Homology^0(X, \omega_{X/k}^{-i} ) \cong k[x_0,\dots,x_4,z]/(q_2,q_4-z^2) $$
		and an isomorphism
		$
		X \cong V_+(q_2,q_4-z^2) \sbe \PP_k(1,1,1,1,1,2)
		$
		of varieties. 
	\end{enumerate}
\end{Prop}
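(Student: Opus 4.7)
The plan is to descend the known algebraically-closed-field classification down to $k$ by faithfully flat descent along $k \hookrightarrow \bar{k}$. Since $X$ is proper over $k$, flat base change yields $H^0(X, \omega_{X/k}^{-i}) \otimes_k \bar{k} \cong H^0(X_{\bar{k}}, \omega_{X_{\bar{k}}/\bar{k}}^{-i})$; combined with the identification \eqref{e:anti-canonical bundle}, this reduces the computation of $\phi(i)$ to a Hilbert-function calculation on the explicit models $V_+(\tilde q_4) \subset \PP^4_{\bar{k}}$ or $V_+(\tilde q_2, \tilde q_4 - \tilde z^2) \subset \PP_{\bar{k}}(1,1,1,1,1,2)$ furnished by Iskovskikh-Shepherd-Barron. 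Both produce the Hilbert series $(1+t)(1+t^2)/(1-t)^4$ and hence the claimed values $\phi(1)=5, \phi(2)=15, \phi(3)=35, \phi(4)=69$. Fix a basis $\xi_0, \dots, \xi_4$ of $H^0(X, \omega_X^{-1})$. By Lemma \ref{l:O(l) x O(m) = O(l+m)} (the hypothesis $Q \notin X$ required there is supplied, after base change, by Lemma \ref{l:smooth subvarieties don't contain Q}), the section ring $R := \bigoplus_{i \geq 0} H^0(X, \omega_X^{-i})$ carries a well-defined graded $k$-algebra structure, and since $\omega_X^{-1}$ is ample one has $X \cong \Proj R$.

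In case a), the 70 degree-$4$ monomials in the $\xi_i$ lie in the $69$-dimensional space $H^0(X, \omega_X^{-4})$ and therefore satisfy a non-zero relation $q_4 \in k[x_0, \dots, x_4]_4$; since the relation space is $1$-dimensional after $-\otimes_k \bar{k}$ (by the classification), it is $1$-dimensional over $k$ as well, so $q_4$ is unique up to scalar. The induced graded $k$-algebra map $k[x_0, \dots, x_4]/(q_4) \to R$ sending $x_i \mapsto \xi_i$ becomes the standard homogeneous-coordinate-ring presentation of $X_{\bar{k}}$ after $-\otimes_k \bar{k}$, hence is an isomorphism of graded $\bar{k}$-algebras; faithful flatness then forces it to be an isomorphism already over $k$, and applying $\Proj$ yields $X \cong V_+(q_4) \subset \PP^4_k$.

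In case b), the analogous dimension count shows that the $15$ monomials $\xi_i \xi_j$ span a $14$-dimensional subspace of $H^0(X, \omega_X^{-2})$ (rank is invariant under base change, and over $\bar{k}$ the relation space is $1$-dimensional by the classification), yielding a unique (up to scalar) relation $q_2 \in k[x_0, \dots, x_4]_2$; pick $\zeta \in H^0(X, \omega_X^{-2})$ completing the $\xi_i \xi_j$ to a basis. In degree $4$, the $1$-dimensional space of relations modulo $q_2$-multiples involves $\zeta^2$ with non-zero coefficient: writing $\zeta$ over $\bar{k}$ as $\tilde z + s(\tilde x)$ for some $s \in \bar{k}[\tilde x]_2$ and computing $\zeta^2 = \tilde q_4(\tilde x) + 2 s(\tilde x)\, \zeta - s(\tilde x)^2$ exhibits the $\zeta^2$-coefficient as $1$. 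After normalisation the relation reads $\zeta^2 = q_4(\xi, \zeta)$ for some $q_4 \in k[x_0, \dots, x_4, z]_4$, yielding a graded map $k[x_0, \dots, x_4, z]/(q_2, q_4 - z^2) \to R$ which is an isomorphism after $-\otimes_k \bar{k}$ and therefore an isomorphism over $k$; applying $\Proj$ concludes. The principal subtlety is verifying in case b) that the degree-$4$ relation takes the canonical form $q_4 - z^2$ rather than a general degree-$4$ polynomial in $\xi, \zeta$; everything else is routine faithfully flat descent from $\bar{k}$ to $k$.
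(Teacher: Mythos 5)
Your proposal is correct and follows essentially the same route as the paper: flat base change to $\bar{k}$, the Iskovskikh--Shepherd-Barron models over $\bar{k}$, dimension counts in the anticanonical section ring to produce the relations $q_2$ and $q_4$, and faithfully flat descent of the resulting graded-algebra isomorphisms. The only cosmetic difference is in case b), where you verify that the degree-$4$ relation has nonzero $z^2$-coefficient by an explicit computation with $\zeta = \tilde{z} + s(\tilde{x})$ over $\bar{k}$, whereas the paper deduces it from Lemma \ref{l:smooth subvarieties don't contain Q} (a smooth model cannot pass through $Q$).
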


\begin{proof} We will prove (2). The proof of (1) is similar.
	Let $X$  be a Fano threefold over $k$ of type (1,1,4) such that there is an isomorphism $$ X_{\ok} \cong V_+(q_2',q_4'-z^2) \sbe \PP_{\ok}(1,1,1,1,1,2), $$
	where $q_j' \in k[x_0,\dots,x_4]_j$. Isomorphism (\ref{e:anti-canonical bundle}) induces an isomorphism $$ \bigoplus_{i \geq 0} \Homology^0\left(X_{\ok}, \omega_{X_{\ok}/\ok}^{-i} \right) \cong k[x_0,\dots,x_4,z]/(q_2',q_4'-z^2) =: R $$
	of graded $k$-algebras. Furthermore for each $ i \geq 0$, there is an isomorphism 
	$$
	\Homology^0(X, \omega_{X/k}^{-i} ) \otimes  \ok \cong \Homology^0\left(X_{\ok}, \omega_{X_{\ok}/\ok}^{-i} \right). 
	$$
	Hence, the value $ \phi(i) $ can be determined by counting the elements of a $\ok$-vector space basis for the degree $i$ part $ R_i $; see \cite{LichtPdh2022} for the computations.
	The basis $ \xi_0,\dots,\xi_4 $ of $ \Homology^0(X, \omega_{X/k}^{-1} ) $ is also a basis for $ \Homology^0(X_{\bar{k}}, \omega_{X_{\bar{k}}/\bar{k}}^{-1} ) $. The image of the multiplication pairing $ R_1 \otimes R_1 \to R_2 $ is generated by the 15 monomials $ x_0^2, x_0x_1, \dots, x_4^2 $. These monomials satisfy the relation $q_2'$. Therefore, the image has dimension 14. Hence, also the monomials $ \xi_0^2,\xi_0\xi_1,\dots,\xi_4^2 $ generate a 14-dimensional subspace of $ \Homology^0(X, \omega_{X/k}^{-2} ) $, and therefore satisfy a relation. 
	
	Let $ q_2 \in k[x_0,\dots,x_4]_2 \setminus \{0\} $ such that $q_2(\xi_0, \dots, \xi_4) = 0$, 
	and let $\zeta \in \Homology^0(X, \omega_{X/k}^{-2} ) $ be an element completing those monomials to a generating set.
	The vector space $(k[x_0,\dots,x_4,z]/(q_2))_4 $ has dimension 70. Hence the map $$ (k[x_0,\dots,x_4,z]/(q_2))_4  \to \Homology^0(X, \omega_{X/k}^{-4} ) $$
	given by $ x_i \mapsto \xi_i, z \mapsto \zeta $ has a non-zero kernel.
	Let $ q''_4 \neq 0 $ be an element of the kernel.
	Then the maps
	$$
	\bigoplus_{i\geq 0} \Homology^0(X, \omega_{X/k}^{-i} ) \to k[x_0,\dots,x_4,z]/(q_2,q''_4)
	$$ 
	and 
	$$
	X \to V_+(q_2,q''_4) \sbe \PP_k(1,1,1,1,1,2)
	$$
	are isomorphisms after the flat base change to the algebraic closure. Therefore, they were isomorphisms, to begin with. By Lemma \ref{l:smooth subvarieties don't contain Q}, the variety $V_+(q_2,q''_4)$ does not contain $Q$. Hence, $q_4''$ has a $ z^2$-term and after a suitable change of coordinates, we get $q_4'' = q_4 - z^2$.
\end{proof}


\section{Families of Fano threefolds of Picard number 1, index 1, degree 4}
\label{s:families of Fanos}

As seen in Section \ref{s:fano_threefolds_over_fields}, over fields of characteristic unequal to $2$, there are two mutually exclusive types of Fano threefolds of Picard rank 1, index 1 and degree 4, namely smooth quartics and hyperelliptic ones. In this section, we will study how these types behave in families of Fano threefolds over schemes $B$ over $ \ZZ[1/2]$. The main tool in this study will be Theorem \ref{t:local_presentation_for_Fanos}, which says that families of such Fano threefolds Zariski locally are weighted complete intersections. 

\subsection*{Local presentation as weighted complete intersection}

We need the following technical lemma, which is a combination of Proposition \ref{p:classifying_Fanos_over_nonclosed_field} with a "cohomology and basechange" argument.

\begin{Lemma}\label{Lemma:fano_anticanonical_bundle_is_free} Let $B$ be a scheme over $ \ZZ[1/2] $ and let $ f\colon X \to B $ be a Fano threefold of type (1,1,4) and let $l\in \ZZ_{>0}$. Then the following statements hold. 
	\begin{enumerate}
		\item The formation of $ f_* \omega_{X/B}^{-l} $ commutes with arbitrary base change.
		\item The sheaf $ f_* \omega_{X/B}^{-l} $ is locally free.
		\item Let $ r(l) = \rank f_* \omega_{X/B}^{-l} $. We have $r(1) = 5$, $r(2)=5$, $r(3)= 35$ and $r(4)= 69$.
		\item Assume additionally that $B$ is affine.  Let $b\in B$ be a point. Then the natural map
		$$
		\Homology^0(X,\omega_{X/B}^{-l}) \otimes \kb \to \Homology^0\left(X_{b},\omega^{-l}_{X_{b}/ k(b)}\right)\colon \quad \xi \otimes \lambda \mapsto \lambda \cdot \xi\vert_{b}
		$$
		is an isomorphism.
	\end{enumerate}
\end{Lemma}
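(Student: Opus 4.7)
The strategy is to reduce everything to a fiberwise statement via Grauert's theorem on cohomology and base change (Hartshorne III.12.9 or EGA III.7.8.4). What is needed is that for each $i \geq 0$ and $l \geq 1$, the function $b \mapsto \dim_{k(b)} H^i(X_b, \omega_{X_b/k(b)}^{-l})$ is (locally) constant on $B$.

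For $i = 0$, Proposition \ref{p:classifying_Fanos_over_nonclosed_field} applied to each geometric fiber yields the values $5, 15, 35, 69$ for $l = 1, 2, 3, 4$, and this number is independent of whether the fiber is of type a) (smooth quartic) or type b) (hyperelliptic). So $h^0$ is constant on $B$. For $i \geq 1$, I would use the identification $\omega_{X_b}^{-l} \cong \Oc_{X_b}(l)$ coming from (\ref{e:anti-canonical bundle}) together with the realization of $X_b$ as a weighted complete intersection in $\PP_{k(b)}(1,1,1,1,1,2)$ cut out by equations of degrees $(1,4)$ in type a) and $(2,4)$ in type b), as in Proposition \ref{p:classifying_Fanos_over_nonclosed_field}. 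The Koszul resolution of $X_b$ twisted by $\Oc(l)$, combined with the standard cohomology of line bundles on $\PP(1,1,1,1,1,2)$ (vanishing in intermediate degrees, with the top-degree piece handled by Serre duality and the fact that $\Oc_{X_b}(-1-l)$ has no sections for $l \geq 1$), yields $H^i(X_b, \Oc_{X_b}(l)) = 0$ for all $i \geq 1$ and $l \geq 1$. Lemma \ref{l:smooth subvarieties don't contain Q} ensures that $X_b$ avoids the singular point $Q$, so no pathologies arise.

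Granting both fiberwise statements, Grauert's theorem gives that each $R^i f_* \omega_{X/B}^{-l}$ is a locally free $\Oc_B$-module whose formation commutes with arbitrary base change. Specializing to $i = 0$ proves (1), (2), and (3). For (4), when $B = \Spec R$ is affine, $f_* \omega_{X/B}^{-l}$ is the quasi-coherent sheaf associated to the finitely generated $R$-module $M = H^0(X, \omega_{X/B}^{-l})$; applying the base change isomorphism from (1) along $\Spec k(b) \to B$ identifies $M \otimes_R k(b)$ with $H^0(X_b, \omega_{X_b/k(b)}^{-l})$, and this identification is precisely the map in (4).

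The main technical point is the characteristic-free vanishing of the higher fiber cohomology: over characteristic zero it would follow immediately from Kodaira vanishing, but since we work over $\ZZ[1/2]$ an explicit argument via the Koszul resolution in weighted projective space is required, with Lemma \ref{l:smooth subvarieties don't contain Q} playing the essential role of placing $X_b$ inside the smooth locus of $\PP(1,1,1,1,1,2)$ so that the usual cohomology formulas apply.
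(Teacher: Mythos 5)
Your overall strategy --- compute the cohomology of $\omega^{-l}$ on each fibre via Proposition \ref{p:classifying_Fanos_over_nonclosed_field} and the weighted-complete-intersection description, then apply a cohomology-and-base-change theorem --- is the same as the paper's, and your fibrewise input (constancy of $h^0$ with values $5,15,35,69$, and vanishing of $h^i$ for $i\geq 1$ via the Koszul resolution, the cohomology of line bundles on $\PP_{k(b)}(1,1,1,1,1,2)$ and Serre duality, with Lemma \ref{l:smooth subvarieties don't contain Q} keeping $X_b$ away from $Q$) is exactly what the paper takes from Proposition \ref{p:classifying_Fanos_over_nonclosed_field} and the cited Lemma~6.4 of \cite{LichtTorelli2022}. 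The genuine gap is in the theorem you use to pass from the fibres to the family. Grauert's theorem (Hartshorne III.12.9, EGA III.7.8.4) requires the base to be \emph{integral}, resp.\ locally Noetherian and \emph{reduced}: over a non-reduced base, constancy of all the functions $b\mapsto h^i(X_b,\cdot)$ does \emph{not} imply local freeness of $R^if_*$ or compatibility with base change. (Over $A=k[\epsilon]$ the two-term complex $A\xrightarrow{\;\epsilon\;}A$ has constant fibrewise cohomology dimensions, yet $H^0$ is not free and does not commute with $-\otimes_A k$.) Here $B$ is an arbitrary scheme over $\ZZ[1/2]$ --- and non-reduced bases are essential later in the paper, e.g.\ for first-order deformations over $k[\epsilon]$ --- so the step ``Grauert gives (1) and (2)'' fails as written.

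The repair is immediate, and is what the paper does: since you have already established $\Homology^1\bigl(X_b,\omega_{X_b/k(b)}^{-l}\bigr)=0$ for every $b$, invoke instead the base-change criterion that needs only fibrewise vanishing of $H^1$ (e.g.\ \cite[Lemma~1.1.5]{Ben12}, or EGA III.7.7.5 together with a spreading-out argument, using that $f$ is proper and finitely presented and $\omega_{X/B}^{-l}$ is finitely presented and $B$-flat). This holds over an arbitrary base, yields (1) and (2) simultaneously, and then (4) follows by base change along $\Spec k(b)\to B$ and (3) from (4) plus the fibrewise dimension count --- the same order of deductions as in your last step. Two peripheral remarks: in type a) the quartic sits in $\PP(1,1,1,1,1,2)$ as $V_+(z,q_4)$, a weighted complete intersection of multidegree $(2,4)$ since $\deg(z)=2$, not $(1,4)$ (this does not affect the vanishing); and the value $r(2)=5$ in the statement of the lemma is a typo for $15$, consistent with your computation of $\phi(2)=15$.
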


\begin{proof}
	The morphism $ f $ is proper and smooth of relative dimension $3$. 
	Hence $f$ is locally of finite presentation and $ \omega_{X/B}^{-l}$ is a sheaf of finite presentation on $X$ and flat over $B$.  
	By Proposition \ref{p:classifying_Fanos_over_nonclosed_field}, for any point $b\in B $, the fibre $ X_{b} $ is a weighted complete intersection of degree $(2,4)$ in $ \PP_{k(b)}(1,1,1,1,1,2) $,
	and $\omega_{X/B}^{-l}\vert_{X_b}$ is isomorphic to $ \Oc_{X_{b}}(l) $; see Isomorphism (\ref{e:anti-canonical bundle}).
	Hence  $h^1\left(X_{b},\omega_{X_{b}/k(b)}^{-l} \right) = 0 $ and the function $$b \to h^0\left(X_{b},\omega_{X_{b}/k(b)}^{-l}\right)$$ is constant; see Proposition \ref{p:classifying_Fanos_over_nonclosed_field} and \cite[Lemma~6.4]{LichtTorelli2022}.
	By applying \cite[Lemma~1.1.5]{Ben12}, we get (1) and (2). 
	We get (4) from (1) by considering the base change along $ \Spec k(b) \to B $. Assertion (3) is a consequence of (4) and  Proposition \ref{p:classifying_Fanos_over_nonclosed_field}.
\end{proof}

\begin{proof}[Proof of Theorem \ref{t:local_presentation_for_Fanos}]
	We will successively shrink $B$ and thereby also $X$ to find a suitable $U$. By Lemma \ref{Lemma:fano_anticanonical_bundle_is_free}.(2), we can shrink $B$ such that $ B = \Spec R $ is affine and $ f_* \omega_{X/B}^{-l} $ is free for $l\in \{1,2,3,4\}$. We choose a basis $\xi_0, \dots, \xi_4 $ for the free $R$-module $ \Hrm^0(X,\omega_{X/B}^{-1}) = \Hrm^0(B,f_* \omega_{X/B}^{-1})  $.
	Then the restrictions $\xi_0\vert_b, \dots, \xi_4\vert_{b} $ form a basis for $\Homology^0\left(X_{b},\omega^{-1}_{X_{b}}\right) $ by Lemma \ref{Lemma:fano_anticanonical_bundle_is_free}.(4). By Proposition \ref{p:classifying_Fanos_over_nonclosed_field}, we know that the $15$ elements 
	$
	\xi_0\vert_{b}^2, \xi_0\xi_1\vert_{b}, \dots, \xi_4\vert_{b}^2
	$
	generate a subspace of dimension at least $14$ in the $15$-dimensional vector space $\Homology^0\left(X_{b},\omega^{-2}_{X_{b}/ \kb}\right) $.
	We choose $ \tilde{z} \in \Homology^0\left(X_{b},\omega^{-2}_{X_{b}/ k(b)}\right)\setminus\{0\} $ such that $\xi_0\vert_{b}^2, \xi_0\xi_1\vert_{b}, \dots, \xi_4\vert_{b}^2, \tilde{z} $
	is a set of generators for $ \Homology^0\left(X_{b},\omega^{-2}_{X_{b}/ k(b)}\right) $. After shrinking $B$, we may find a $ \zeta \in \Hrm^0(X,\omega_{X/B}^{-2}) $ such that 
	$ \zeta\vert_{b} = \tilde{z} $. Shrinking $B$ again, we may assume that 
	$
	\xi_0^2, \xi_0\xi_1, \dots, \xi_4^2, \zeta 
	$
	generates $\Hrm^0(X,\omega_{X/B}^{-2})$. Hence these elements define a surjection 
	$ \phi\colon R^{16}  \to \Hrm^0(X,\omega_{X/B}^{-2}) $.
	Let $K = \ker(\phi) $.
	As $ \Hrm^0(X,\omega_{X/B}^{-2}) $ is a free $R$-module of rank $15$, we get an exact sequence 
	$$
	0 \to K \otimes k(b) \to k(b)^{16} \to \Homology^0\left(X_{b},\omega^{-2}_{X_{b}/ k(b)}\right) \to 0,
	$$   
	where $ K \otimes k(b) $ is a $1$-dimensional $ k(b)$ vector space. After shrinking $B$, we can find $ \lambda \in K $ sucht that $ \lambda\vert_{b} $ generates $ K \otimes k(b)$.
	In particular $ \lambda\vert_{b} \neq 0 $. We shrink $B$ such that $ \lambda\vert_{p} \neq 0 $ for all $p\in B$.
	Then $ \lambda\vert_{p} $ generates $K \otimes k(p)$ for all $p \in P$. The element $\lambda $ corresponds to a polynomial $ q_2 \in R[x_0,\dots,x_4,z]_2 $ such that $ q_2(\xi_0,\dots,\xi_4,\zeta) = 0$.
	
	By Proposition \ref{p:classifying_Fanos_over_nonclosed_field}, the
	restrictions
	$$
	\xi_0^4\vert_p, \xi_0^3\vert_p\xi_1\vert_p, \dots, \xi_4^4\vert_p, \xi_i\vert_p \xi_j\vert_p \zeta\vert_p, \zeta\vert_{p}^2 
	$$
	generate $\Homology^0\left(X_{p},\omega^{-4}_{X_{p}/ k(p)}\right)$. The map 
	$$
	\left( \frac{ k(p)[x_0,\dots,x_4,z]}{q_2\vert_p} \right)_4
	\to \Homology^0\left(X_{p},\omega^{-4}_{X_{p}/ k(p)}\right)
	$$
	that maps $ x_i\mapsto \xi_i\vert_p$ and $z\mapsto \zeta\vert_p $ has a $1$ dimensional kernel. As above, after shrinking $B$, we find a global generator $\nu$ for this kernel. The section $\nu$ corresponds to a polynomial
	$ q_4 \in R[x_0,\dots,x_4,z]_4 $ such that $ q_4(\xi_0,\dots,\xi_4,\zeta) = 0$.
	The map $ x_i\mapsto \xi_i$, $z\mapsto \zeta$ induces a morphism 
	$$
	X \to V_+(q_2,q_4) \sbe \PP_R(1,1,1,1,1,2).
	$$
	Again by Proposition \ref{p:classifying_Fanos_over_nonclosed_field}, this is an isomorphism fibre-wise. Hence it is an isomorphism.
\end{proof}

\subsection*{The hyperelliptic and smooth quartic locus}

\begin{Def}
	Let $B$ be a scheme over $ \ZZ[1/2] $. A Fano scheme $f\colon X \to B$ of type (1,1,4) is called \emph{hyperelliptic} if there is an open affine cover $ B = \bigcup_{i\in I} U_i$, $U_i = \Spec R_i $ such that the polarized scheme $$ (f\vert_{f\inv(U_i)}\colon f\inv(U_i) \to U_i, \omega_{f\inv(U_i)/U_i}\inv) $$ is isomorphic to a complete intersection $ (V_+(q_2,q_4),\Oc(1)) $ in $ \PP_{R_i}(1,1,1,1,1,2)$ with \\ $ q_2 \in R_i[x_0,\dots,x_4]$.
	
\end{Def}

\begin{Rem} Theorem \ref{t:local_presentation_for_Fanos} guarantees that there is an open affine cover on which a Fano scheme is a weighted complete intersection. The additional condition imposed by being hyperelliptic is that  $ q_2 $ does not depend on $z$. Note that every geometric fibre of a hyperelliptic Fano threefold is hyperelliptic in the sense of Iskovskik's classification. More generally, the following statement holds.
	
	Let $B$ be a scheme over $ \ZZ[1/2] $ and let $f\colon X \to B$ be a hyperelliptic Fano threefold of type (1,1,4).
	Then for every field $k$ and every point $ b \in B(k)$,
	the fibre $ X_b $ is a double cover of a smooth quadric in $\PP^4_k$ ramified along a smooth surface of degree $8$. 
	Indeed, 
	since $f\colon X \to B$ is hyperelliptic, each fibre is a weighted complete intersection $ V_+(q_2,q_4') \sbe \PP_{k}(1,1,1,1,1,2)  $ with $q_2$ not depending on $z$. By Lemma \ref{l:smooth subvarieties don't contain Q}, the polynomial $ q_4' $ has a $z^2$-term and we may assume that $ q_4' = q_4 - z^2$ for some $q_4 \in k[x_0,\dots,x_4] $ after a suitable change of coordinates. The variety $ V_+(q_2,q_4-z^2)$ is a double cover of $V_+(q_2) \sbe \PP^4_k $ ramified along $ V_+(q_2,q_4)$.
\end{Rem}

The condition on $q_2$ in the definition of hyperelliptic does not depend on the choice of the open covering. 

\begin{Lemma}\label{L:definition_does_not_depend_on_cover}
	Let $B$ be a scheme over $\ZZ[1/2] $. Let $f\colon X \to B$ be a hyperelliptic Fano scheme of type (1,1,4). Let $U = \Spec R \subset B $ be an open affine subscheme such that there is an isomorphism of polarized schemes 
	$$ (f\vert_{U}\colon f\inv(U), \omega_{f\inv(U_i)/U}) \cong (V_+(q_2,q_4),\Oc(1))$$ over $U$. 
	Write $ q_2 = q_2' + c z $ with $ q_2' \in R[x_0,\dots,x_4]$. Then we have $c=0$.
\end{Lemma}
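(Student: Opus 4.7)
The plan is to realize the cokernel of a natural multiplication map on $U$ simultaneously as $R/Rc$ (via the given presentation) and as a line bundle (via hyperellipticity of every fiber); the two descriptions together force $c=0$. Concretely, I would consider
$$
\mu \colon \Sym^2 f_*\omega_{X/B}^{-1}|_U \to f_*\omega_{X/B}^{-2}|_U.
$$
By Lemma \ref{Lemma:fano_anticanonical_bundle_is_free}, both source and target are locally free $R$-modules of rank $15$. By the remark preceding the present lemma, hyperellipticity of $f$ passes to every fiber, so each $X_b$ is hyperelliptic; Proposition \ref{p:classifying_Fanos_over_nonclosed_field}(2) then shows $\mu$ has constant fiberwise rank $14$. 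Standard linear algebra for maps of locally free sheaves of constant fiberwise rank yields that $\mathcal{L} := \coker(\mu)$ is a locally free $R$-module of rank $1$.

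Next I would identify $\mathcal{L}(U)$ concretely. Writing $\xi_i, \zeta$ for the images of $x_i, z$ under the presentation, the degree-$2$ part of the coordinate ring gives an isomorphism
$$
R[x_0,\ldots,x_4,z]_2 / R q_2 \xrightarrow{\sim} f_*\omega_{X/B}^{-2}(U),
$$
under which the image of $\mu$ corresponds to the submodule spanned by the $\xi_i \xi_j$. Using the decomposition $R[x_0,\ldots,x_4,z]_2 = R\langle x_i x_j\rangle \oplus Rz$ and the expression $q_2 = q_2' + cz$, projection onto the $Rz$-summand identifies $\mathcal{L}(U) \cong R/Rc$. Combining both descriptions, $R/Rc$ is a locally free $R$-module of rank $1$. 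Since the annihilator of such a module vanishes and $c$ annihilates $R/Rc$, we conclude $c=0$.

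The main obstacle is the locally-free-rank-$1$ claim for $\mathcal{L}$, which reduces to constancy of the fiberwise rank of $\mu$. Here the hyperellipticity of \emph{every} (not just geometric) fiber plays the essential role, via the $14$-dimensional image statement in Proposition \ref{p:classifying_Fanos_over_nonclosed_field}(2). A secondary, routine point is identifying the degree-$2$ part of the coordinate ring with $f_*\omega_{X/B}^{-2}(U)$; this follows from Lemma \ref{Lemma:fano_anticanonical_bundle_is_free} combined with fiberwise surjectivity from Proposition \ref{p:classifying_Fanos_over_nonclosed_field}.
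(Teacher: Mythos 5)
Your argument is correct, but it takes a genuinely different route from the paper's. The paper's proof is a direct comparison of presentations: since $f$ is hyperelliptic, one may shrink $U$ so that $f^{-1}(U)$ also admits a presentation $V_+(q_2'',q_4'')$ with $q_2''$ independent of $z$; the isomorphism of polarized schemes induces an isomorphism $\alpha$ of graded $R$-algebras, one writes $\alpha(z)=az+p$, deduces from surjectivity in degree $2$ (using that $q_2''$ has no $z$-term) that $a$ is a unit, and reads off $ca=0$ from the image of the relation $q_2'+cz$, whence $c=0$. You instead identify the cokernel of the multiplication map $\Sym^2 f_*\omega_{X/B}^{-1}\to f_*\omega_{X/B}^{-2}$ intrinsically: constancy of the fibrewise rank (equal to $14$ because every fibre is hyperelliptic, by the remark preceding the lemma, together with Proposition \ref{p:classifying_Fanos_over_nonclosed_field}(2)) makes the cokernel a line bundle, while the given presentation computes it as $R/Rc$, and a locally free module of rank one has trivial annihilator. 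Both arguments are sound, and there is no circularity in your references. Your approach is more conceptual --- it exhibits $V(c)$ as the degeneracy locus of an intrinsically defined map of vector bundles, which in effect also reproves Proposition/Definition \ref{p:hyperelliptic_locus} without choosing covers --- at the cost of invoking cohomology and base change (Lemma \ref{Lemma:fano_anticanonical_bundle_is_free}) and the identification of $f_*\omega_{X/B}^{-2}(U)$ with the degree-$2$ part of the coordinate ring; the paper's proof is shorter and purely algebraic. One small point worth making explicit in your write-up: the source $R[x_0,\dots,x_4,z]_2/Rq_2$ of your displayed isomorphism is locally free of rank $15$ because $q_2\otimes k(b)\neq 0$ for every $b\in\Spec R$ (otherwise the fibre would have dimension $4$), so the coefficient vector of $q_2$ is unimodular and the fibrewise comparison with $f_*\omega_{X/B}^{-2}$ really does yield an isomorphism of locally free modules.
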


\begin{proof}
	Note that $c=0$ if and only if there is an open affine covering $ U = \bigcup V_j $ such that $ c\vert_{V_j} = 0$. 
	As $f\colon X \to B$ is hyperelliptic, after shrinking $U$ if necessary, we may assume that  $  f\inv(U) $ has a presentation  
	$ V_+(q_2'',q_4'') \sbe \PP_R(1,1,1,1,1,2) $ 
	with $ q_j'' \in R[x_0,\dots,x_4]_j$. Thus, there is an isomorphism over $U$ of polarized schemes
	$$
	(V_+(q_2'+c,q_4),\Oc(1)) \cong  (V_+(q_2'',q_4''),\Oc(1)). 
	$$
	The isomorphism of polarized schemes induces an isomorphism 
	$$
	\alpha\colon R[x_0,\dots,x_4,z]/(q_2'+cz,q_4) \to R[x_0,\dots,x_4,z]/(q_2'',q_4'')
	$$
	of graded $R$-algebras.
	Since $z$ is of degree 2, the image $\alpha(z) $  has to be of degree $2$. We write $ \alpha(z) = a z + p $ with $ a \in R$ and  $p \in  R[x_0,\dots,x_4]_2 $. As the degree 2 relation $q_2''$ does not depend on $z$, it follows from the subjectivity of $\alpha$ that $a$ is invertible. Since $ a\cdot c = 0$, it follows that $c= 0$, as required.	
\end{proof}

\begin{Def}\label{d:smooth quartic}
	Let $B$ be a scheme over $ \ZZ[1/2] $. A Fano scheme $f\colon X \to B$  of type (1,1,4) is a \emph{smooth quartic} if every geometric fibre is a smooth quartic. 
\end{Def}

One might ask why we did not define hyperelliptic Fano threefolds via a fibre-wise criterion as well. The problem with such a definition is that over non-reduced bases, it would allow for "hyperelliptic" Fano threefolds that are not double covers, as the following example shows.

\begin{Ex}
	Let $k$ be a field with $2\in k^\times $. Consider the family $$ X = V_+(x_0^2+\dots + x_4^2+\epsilon z,x_0^4+\dots+x_4^4-z^2) \sbe \PP_{k[\epsilon]}(1,1,1,1,1,2) $$ over $ k[\epsilon] := k[t]/(t^2)$.
	This family has just one geometric fibre, which is hyperelliptic. However, it is neither hyperelliptic nor a smooth quartic. 
\end{Ex}

At the same time, this is an example of a first-order deformation of a hyperelliptic Fano threefold that is not hyperelliptic. This is in accordance with the fact that the locus of hyperelliptic Fanos is a closed substack of $ \Fc $. 

\begin{PropDef}[hyperelliptic locus]\label{p:hyperelliptic_locus}
	Let $B$ be a scheme over $ \ZZ[1/2] $ and let $ f\colon X \to B $ be a Fano threefold of type (1,1,4). Then there is a closed subscheme $ c\colon B_{hyp} \to  B $ such that $ X \times_B B_{hyp} \to B_{hyp}$ is hyperelliptic with the following universal property.
	If $ g\colon T \to B $ is a morphism of schemes such that $ X \times_B T$ is hyperelliptic, then there is a unique morphism $ g\pr\colon T \to B_{hyp} $ such that $ g = c\circ g\pr$. 
	Moreover, $B_{hyp}$ is the set of points $ b\in B$ such that the geometric fibre $ X_{\bar{b}} $ over $k(\bar{b})$ is hyperelliptic.  We call $B_{hyp}$ the \emph{hyperelliptic locus of $f\colon X \to B$}.
\end{PropDef}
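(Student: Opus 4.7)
The plan is to construct $B_{hyp}$ by Zariski-locally cutting out the coefficient of $z$ in the degree-two relation of the weighted-complete-intersection presentations furnished by Theorem \ref{t:local_presentation_for_Fanos}, and to verify the universal property by repeated application of Lemma \ref{L:definition_does_not_depend_on_cover}.

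First I would cover $B$ by affine opens $U_i = \Spec R_i$ on each of which Theorem \ref{t:local_presentation_for_Fanos} supplies an isomorphism of polarized schemes identifying $f\inv(U_i) \to U_i$ with a complete intersection $V_+(q_2^{(i)}, q_4^{(i)}) \sbe \PP_{R_i}(1,1,1,1,1,2)$. Writing $q_2^{(i)} = q_2'^{(i)} + c_i z$ with $q_2'^{(i)} \in R_i[x_0,\dots,x_4]_2$ and $c_i \in R_i$, I would provisionally set $B_{hyp} \cap U_i := V(c_i) \sbe U_i$. To glue these into a global closed subscheme, I need to show that the ideal $(c_i) \sbe R_i$ is intrinsic to $X|_{f\inv(U_i)}$ and independent of the chosen presentation.

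The key observation is that, by construction, the restriction $X \times_{U_i} V(c_i) \to V(c_i)$ is hyperelliptic: the presentation $V_+(q_2^{(i)}, q_4^{(i)})$ pulled back to $V(c_i)$ has its degree-two relation free of $z$. Hence if $V_+(\tilde q_2^{(i)}, \tilde q_4^{(i)})$ is any other local presentation of $X|_{f\inv(U_i)}$, with $\tilde q_2^{(i)} = \tilde q_2'^{(i)} + \tilde c_i z$, Lemma \ref{L:definition_does_not_depend_on_cover} applied to the hyperelliptic family over $V(c_i)$ forces $\tilde c_i|_{V(c_i)} = 0$, i.e.\ $\tilde c_i \in (c_i)$. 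Swapping the roles of the two presentations yields the reverse inclusion, and hence $(c_i) = (\tilde c_i)$. Applied on overlaps $U_i \cap U_j$ (after passing to a common affine refinement that carries both presentations), this simultaneously gives the gluing and confirms that $X \times_B B_{hyp} \to B_{hyp}$ is hyperelliptic.

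For the universal property, given $g\colon T \to B$ with $X \times_B T$ hyperelliptic, I would pull back the presentation on $U_i$ to obtain a presentation $V_+(g^* q_2^{(i)}, g^* q_4^{(i)})$ of the hyperelliptic family $X \times_B g\inv(U_i)$; Lemma \ref{L:definition_does_not_depend_on_cover} then yields $g^* c_i = 0$, so $g|_{g\inv(U_i)}$ factors uniquely through $V(c_i) = B_{hyp} \cap U_i$, and these factorizations glue to the desired $g'\colon T \to B_{hyp}$. For the set-theoretic description, $c_i(b) = 0$ means $X_{\bar b}$ admits a presentation whose degree-two relation is free of $z$, and Lemma \ref{l:smooth subvarieties don't contain Q} forces a $z^2$-term in $q_4^{(i)}|_{\bar b}$ which, after a coordinate change, exhibits $X_{\bar b}$ as a double cover of a smooth quadric; conversely, if $c_i(b) \neq 0$ one solves the linear-in-$z$ relation for $z$ and presents $X_{\bar b}$ as a smooth quartic. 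The main obstacle is precisely the independence of $(c_i)$ from the chosen presentation, which is why Lemma \ref{L:definition_does_not_depend_on_cover} had to be proved beforehand; once it is in hand, both gluing and the universal property fall out of the same argument.
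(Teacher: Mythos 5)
Your proposal is correct and follows essentially the same route as the paper: construct $B_{hyp}$ locally as $V(c)$ from the presentations of Theorem \ref{t:local_presentation_for_Fanos} and deduce the universal property from Lemma \ref{L:definition_does_not_depend_on_cover}. The only cosmetic difference is in the gluing step — you show directly that the ideal $(c_i)$ is intrinsic to the family, whereas the paper glues the locally constructed loci via the uniqueness forced by the universal property — and you additionally spell out the pointwise description, which the paper leaves implicit.
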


\begin{proof}
	The universal property shows that if it exists, the hyperelliptic locus is unique up to a unique isomorphism.
	Therefore, the assertion is local on $B$, because the universality of the hyperelliptic locus allows us to construct the locus locally and glue it together afterwards. 
	By Theorem \ref{t:local_presentation_for_Fanos}, we may therefore assume that $ B = \Spec R $ is affine and $ X = V_+(q_2,q_4) \sbe \PP_R(1,1,1,1,1,2) $ is a weighted complete intersection. 
	We write $ q_2 = q_2\pr + c z$, where $ q_2\in R[x_0,\dots,x_4]_2 $ and $ c \in R $. We set $ B_{hyp} = V(c) \sbe B $. 
	
	The assertion that a morphism $ g\colon T \to B $ is a morphism of schemes such that $ X \times_B T$ is hyperelliptic factors uniquely through $B_{hyp}$ is local on $T$. 
	We may therefore assume that $ T = \Spec S $ is affine. Let $ \alpha\colon R \to S $ be the morphism of rings corresponding to $g$. 
	Now $$ X \times_B T \cong V_+(\alpha(q_4),\alpha(q_2')+\alpha(c)z) \sbe \PP_S(1,1,1,1,1,2) $$ 
	is a presentation as weighted complete intersection. Therefore $\alpha(c) = 0 $ by Lemma \ref{L:definition_does_not_depend_on_cover}. Hence $ g$ factors uniquely though $ B_{hyp} = V(c) = \Spec R/(c)$.
\end{proof}

There is a similar statement for the smooth quartic locus that is proven completely analogous.

\begin{PropDef}(smooth quartic locus)\label{p:smooth_quartic_locus}
	Let $B$ be a scheme over $ \ZZ[1/2] $ and let $ f\colon X \to B $ be a Fano threefold of type (1,1,4). There is an open subscheme $ o\colon B_{sq} \to  B $ such that $ f\colon f\inv(B_{sq}) \to B_{sq}$ is a smooth quartic with the following universal property.
	If $ g\colon T \to B $ is a morphism of schemes such that $ X \times_B T$ is a smooth quartic, then there is a unique morphism $ g\pr\colon T \to B_{sq} $ such that $ g = c\circ g\pr$. 
	Furthermore $ B_{sq} $ is the complement of $ B_{hyp}$ in $ B $.
\end{PropDef}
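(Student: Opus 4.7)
The plan is to take advantage of the fact that Proposition \ref{p:hyperelliptic_locus} has already produced the closed subscheme $B_{hyp} \sbe B$ of points whose geometric fiber is hyperelliptic; I would simply let $B_{sq}$ be its open complement, equipped with the canonical open subscheme structure. Because openness of $B_{sq} \to B$ reduces the universal property to a purely set-theoretic statement about fibers, the argument then only needs Iskovskikh's classification (Proposition \ref{p:classifying_Fanos_over_nonclosed_field}) as input.

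First I would verify that $f \colon f\inv(B_{sq}) \to B_{sq}$ is a smooth quartic in the sense of Definition \ref{d:smooth quartic}. By Proposition \ref{p:hyperelliptic_locus}, the underlying set of $B_{sq}$ consists of exactly those $b \in B$ for which $X_{\bar b}$ is not hyperelliptic, and by Iskovskikh's classification (Proposition \ref{p:classifying_Fanos_over_nonclosed_field}) a Fano threefold of type $(1,1,4)$ over an algebraically closed field of characteristic $\neq 2$ is either hyperelliptic (type b)) or a smooth quartic (type a)), the two possibilities being mutually exclusive. Hence every geometric fiber of $f$ over $B_{sq}$ is a smooth quartic. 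If I wanted a more hands-on check, Theorem \ref{t:local_presentation_for_Fanos} lets me write $X\vert_{\Spec R} \cong V_+(q_2, q_4)$ with $q_2 = q_2' + cz$; then $B_{sq} \cap \Spec R = D(c)$, and after inverting $c$ I can solve $q_2 = 0$ for $z$ and substitute into $q_4$ to obtain an explicit presentation of $f\inv(D(c))$ as a quartic $V_+(\tilde q_4) \sbe \PP^4_{R_c}$.

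For the universal property, suppose $g \colon T \to B$ is a morphism such that $X \times_B T$ is a smooth quartic. Because $B_{sq} \hookrightarrow B$ is an open immersion, any factorization through $B_{sq}$ is automatically unique, and existence reduces to the topological condition $g(T) \sbe B_{sq}$. For any $t \in T$ with geometric point $\bar t$, the fiber $(X \times_B T)_{\bar t} \cong X_{g(\bar t)}$ is a smooth quartic by assumption, and the dichotomy in Iskovskikh's classification forces it to be non-hyperelliptic; therefore $g(t) \notin B_{hyp}$, so $g(t) \in B_{sq}$, as required. There is no substantial obstacle in this argument; the one point worth isolating is the mutual exclusivity of types a) and b) in the dichotomy, but this is implicit in Proposition \ref{p:classifying_Fanos_over_nonclosed_field} (the hyperelliptic anticanonical ring genuinely requires a degree-$2$ generator beyond the five degree-$1$ generators, whereas the smooth quartic ring does not), so the construction is essentially formal once Proposition \ref{p:hyperelliptic_locus} is in hand.
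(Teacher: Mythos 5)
Your proposal is correct and matches the paper's intended argument: the paper only remarks that the proof is ``completely analogous'' to Proposition/Definition \ref{p:hyperelliptic_locus}, which locally sets $B_{hyp}=V(c)$, so taking $B_{sq}$ to be its open complement (locally $D(c)$) and checking the universal property fibrewise via the dichotomy of Proposition \ref{p:classifying_Fanos_over_nonclosed_field} is exactly the analogous construction. Your observation that the universal property here is purely set-theoretic (because Definition \ref{d:smooth quartic} is fibrewise, unlike the hyperelliptic case which needed Lemma \ref{L:definition_does_not_depend_on_cover}) is a correct and worthwhile clarification.
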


\subsection*{The involution on a hyperelliptic Fano scheme}

Next, we want to construct an involution on hyperelliptic Fano threefolds. Note that local presentations can be chosen in a certain form. 

\begin{Lemma}\label{L:local_presentation_for_hyperelliptic_Fanos}
	Let $B$ be a scheme over $ \ZZ[1/2] $ and let $ f\colon X \to B $ be a hyperelliptic Fano threefold of type (1,1,4).
	Let $b\in B$ be a point. There is an affine open neighbourhood $ U = \Spec R \sbe B $ of $b$ and polynomials $q_2,q_4 \in R[x_0,\dots,x_4]$ of degree $\deg(q_2) = 2$ and $ \deg(q_4) = 4$  such that there is a isomorphism 
	$$
	(f\inv(U),\omega_{f\inv(U)/U}\inv) \cong (V_+(q_2,q_4-z^2),\Oc(1)) 
	$$
	of polarized schemes over $U$.
\end{Lemma}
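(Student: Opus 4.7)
The plan is to start from the local presentation provided by Theorem \ref{t:local_presentation_for_Fanos} and then use the hyperelliptic hypothesis plus a completion of the square in $z$ to put the defining equations into the required form. After shrinking $B$ around $b$, Theorem \ref{t:local_presentation_for_Fanos} gives us an affine open neighbourhood $U = \Spec R$ and polynomials $\tilde q_2 \in R[x_0,\dots,x_4,z]_2$, $\tilde q_4 \in R[x_0,\dots,x_4,z]_4$ together with an isomorphism of polarized schemes $(f\inv(U),\omega_{f\inv(U)/U}\inv) \cong (V_+(\tilde q_2,\tilde q_4),\Oc(1))$ over $U$.

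Since $f$ is hyperelliptic, Lemma \ref{L:definition_does_not_depend_on_cover} applies on $U$ (possibly after further shrinking to match a chart of the hyperelliptic cover), and so the coefficient of $z$ in $\tilde q_2$ vanishes. Thus we may write $\tilde q_2 = q_2 \in R[x_0,\dots,x_4]_2$. Next, decompose
\begin{equation*}
\tilde q_4 = a_0 + a_2\, z + a_4\, z^2, \qquad a_0 \in R[x_0,\dots,x_4]_4,\ a_2 \in R[x_0,\dots,x_4]_2,\ a_4 \in R.
\end{equation*}
To see that $a_4 \in R^\times$ after shrinking, evaluate at the point $Q = (0{:}0{:}0{:}0{:}0{:}1)$ of $\PP_R(1,1,1,1,1,2)$: since $q_2$ has no $z$-term, $q_2(Q) = 0$, so if $a_4(b) = 0$ then each geometric fibre over $b$ would contain $Q$, contradicting Lemma \ref{l:smooth subvarieties don't contain Q}. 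Hence $a_4$ is nowhere vanishing on $U$; shrinking $U$ if necessary, $a_4 \in R^\times$. Replacing $\tilde q_4$ by $-\tilde q_4/a_4$ (which does not change $V_+(q_2,\tilde q_4)$ because $-1/a_4$ is a unit), we may assume that $\tilde q_4 = a_0 + a_2 z - z^2$ for new polynomials $a_0 \in R[x_0,\dots,x_4]_4$ and $a_2 \in R[x_0,\dots,x_4]_2$.

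Now comes the key step: completing the square in $z$. Because $2 \in R^\times$ and $a_2/2$ has degree $2$, the same as $z$, the assignment $z \mapsto z - a_2/2$ (and $x_i \mapsto x_i$) defines a graded $R$-algebra automorphism $\varphi$ of $R[x_0,\dots,x_4,z]$, hence an automorphism of $\PP_R(1,1,1,1,1,2)$ preserving $\Oc(1)$. A direct computation gives
\begin{equation*}
\varphi(\tilde q_4) = a_0 + \tfrac{a_2^2}{4} - z^2,
\end{equation*}
while $\varphi(q_2) = q_2$ since $q_2$ does not involve $z$. Setting $q_4 := a_0 + a_2^2/4 \in R[x_0,\dots,x_4]_4$, the automorphism $\varphi$ induces the desired isomorphism $(V_+(q_2,\tilde q_4),\Oc(1)) \cong (V_+(q_2, q_4 - z^2),\Oc(1))$ of polarized schemes over $U$, and composing with the isomorphism from Theorem \ref{t:local_presentation_for_Fanos} finishes the proof.

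The main conceptual obstacle is not the algebra of completing the square, but verifying that the substitution is a legitimate change of coordinates on weighted projective space, i.e.\ that it preserves the grading and the polarization; this is the reason it is crucial that $a_2$ be a polynomial of degree exactly $2$ in the $x_i$ (so that it matches $\deg z = 2$), which is automatic here. The remaining routine inputs are the unit-ness of $a_4$ via Lemma \ref{l:smooth subvarieties don't contain Q} and the vanishing of the $z$-coefficient of $q_2$ via Lemma \ref{L:definition_does_not_depend_on_cover}.
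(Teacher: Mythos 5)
Your proof is correct and follows essentially the same route as the paper: extract the local weighted-complete-intersection presentation from Theorem \ref{t:local_presentation_for_Fanos}, kill the $z$-term of $q_2$ using hyperellipticity via Lemma \ref{L:definition_does_not_depend_on_cover} (here you are actually more explicit than the paper, which asserts this form of the presentation directly), deduce that the $z^2$-coefficient is a unit from Lemma \ref{l:smooth subvarieties don't contain Q}, and complete the square with a degree-preserving substitution in $z$. The only quibble is a sign/direction slip: with $\varphi(z)=z-a_2/2$ the displayed identity actually computes $\varphi^{-1}(\tilde q_4)$ (equivalently, the substitution eliminating the linear term is $z\mapsto z+a_2/2$), but this convention issue is immaterial and the paper's own proof carries the same ambiguity.
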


\begin{proof}
	By Theorem \ref{t:local_presentation_for_Fanos}, we can find a local presentation $ V_+(q_2,q_4) $ with $q_2,q_4 \in R[x_0,\dots,x_4] $. Write $ q_4 = q_4' + p_2 z + d z^2 $ with 
	$q_4',p_2 \in R[x_0,\dots,x_4] $, $d\in R$. The coefficient $d$ is invertible. Otherwise there would be a fibre with $ d \otimes k(b) = 0 $. This is impossible as this fibre then would be singular by Lemma \ref{l:smooth subvarieties don't contain Q}. After the change of coordinates $ z \mapsto z + \frac{p_2}{2d}$, we get the desired equations.  
\end{proof}

\begin{Def}\label{D:double_cover_flip_involution}
	Let $B$ be a scheme over $ \ZZ[1/2] $ and let $ f\colon X \to B $ be a hyperelliptic Fano threefold of type (1,1,4).
	The \emph{involution induced by the double cover} is the map $ \iota \in \Aut_B(X)$ that over any open affine $ U \sbe B $ with 
	$(f\inv(U),\omega_{f\inv(U)/U}\inv) \cong (V_+(q_2,q_4-z^2),\Oc(1)) $ as in Lemma \ref{L:local_presentation_for_hyperelliptic_Fanos} is given by $ z \mapsto - z$. 
\end{Def}

To see that the involution is well defined, we have to show that the defined maps agree on the intersection of two open affines over which $X$ has a presentation of the given form. Since we can cover the intersection with smaller open affines, it suffices to show the following statement. If $ U = \Spec R \sbe B $ is open affine
and $q_2,q_4,q_2',q_4' \in R[x_0,\dots,x_4]$ are polynomials such that 
$$
(V_+(q_2,q_4-z^2),\Oc(1)) \cong 	(f\inv(U),\omega_{f\inv(U)/U}\inv) \cong (V_+(q_2',q_4'-z^2),\Oc(1)),
$$
then the maps that are given by $ z \mapsto -z $ are identified under the isomorphisms. The isomorphism of polarized schemes induces an isomorphism of graded $R$-algebras
$$
\alpha\colon R[x_0,\dots,x_4,z]/(q_2,q_4-z^2) \to R[x_0,\dots,x_4,z]/(q_2',q_4'-z^2).	
$$
To preserve the equations, the image $\alpha(z) = a z + p$ now has to satisfy $ p = 0$. Hence $\alpha$ commutes with the map $z \mapsto -z$.


\section{The stacks of hyperelliptic and smooth quartic Fanos} \label{s:stacks of hyperelliptics and quartics}

In this section, we define the stack of hyperelliptic and smooth quartic Fanos and prove
Theorem {\ref{t:stratification of Fano stack}}.

\begin{Def}
	Let $ \mathcal{F} $ over $\ZZ[1/2]$ be the stack of Fano threefolds of type (1,1,4) as defined in Section \ref{s:stack of Fano threefolds}.
	\begin{enumerate}
		\item We define $ \mathcal{H} $ to be the full fibred subcategory of $ \mathcal{F}$ of those Fano threefolds $f\colon X \to B $ that are hyperelliptic.
		\item We define $ \mathcal{Q} $ to be the full fibred subcategory of $ \mathcal{F}$ of those Fano threefolds $f\colon X \to B $ that are smooth quartic.
	\end{enumerate} 
\end{Def}

\begin{Lemma}
	The categories fibred in groupoids $ \mathcal{H}\to \Spec \ZZ[1/2]$ and $ \mathcal{Q}\to \Spec \ZZ[1/2]$ are stacks.
\end{Lemma}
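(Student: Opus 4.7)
The plan is to bootstrap from the fact (Proposition \ref{P:facts about the stack of Fanos}) that $\mathcal{F}$ is already a stack, and combine this with the universal properties of the hyperelliptic locus (Proposition/Definition \ref{p:hyperelliptic_locus}) and the smooth quartic locus (Proposition/Definition \ref{p:smooth_quartic_locus}).

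First, I would observe that since $\mathcal{H}$ and $\mathcal{Q}$ are \emph{full} fibred subcategories of $\mathcal{F}$, the morphism presheaves $\underline{\Isom}_{\mathcal{H}}(X,Y)$ and $\underline{\Isom}_{\mathcal{Q}}(X,Y)$ agree with $\underline{\Isom}_{\mathcal{F}}(X,Y)$ whenever both source and target lie in the subcategory. Thus the sheaf condition on morphisms is inherited from $\mathcal{F}$ and requires no further work. Only effective descent of objects needs to be checked.

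So let $\{U_i \to B\}_{i \in I}$ be a cover (in whichever topology the algebraic stack $\mathcal{F}$ is stated for; fppf or étale), equipped with descent data consisting of objects $X_i \in \mathcal{H}(U_i)$ (respectively $\mathcal{Q}(U_i)$) and cocycle-compatible isomorphisms $\phi_{ij}\colon X_i|_{U_{ij}} \isomto X_j|_{U_{ij}}$. Since $\mathcal{F}$ is a stack, this data descends to some $X \in \mathcal{F}(B)$ with $X|_{U_i} \cong X_i$ in $\mathcal{F}(U_i)$, and it only remains to show that $X$ itself is hyperelliptic (respectively a smooth quartic).

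For this, the key observation is that the loci $B_{hyp}$ and $B_{sq}$ from Propositions \ref{p:hyperelliptic_locus} and \ref{p:smooth_quartic_locus} are compatible with arbitrary base change: their universal properties give $B_{hyp} \times_B T = T_{hyp}$ and $B_{sq} \times_B T = T_{sq}$ for any morphism $T \to B$. Applying this to each $U_i \to B$, the hypothesis that $X_i$ is hyperelliptic means $(U_i)_{hyp} = U_i$, i.e.\ the closed immersion $B_{hyp} \times_B U_i \hookrightarrow U_i$ is an isomorphism for every $i$. Since being an isomorphism of schemes descends along fppf (and a fortiori étale and Zariski) covers, the closed immersion $B_{hyp} \hookrightarrow B$ is itself an isomorphism, so $X \in \mathcal{H}(B)$ by the universal property of $B_{hyp}$. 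The argument for $\mathcal{Q}$ is identical, using the open immersion $B_{sq} \hookrightarrow B$ in place of $B_{hyp} \hookrightarrow B$.

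There is no real obstacle here: all the substantive work has already been done in Section \ref{s:families of Fanos}. The only subtle point is verifying the base-change compatibility of $B_{hyp}$ and $B_{sq}$, which is immediate from their defining universal properties (or directly from the local description $B_{hyp} = V(c)$ in the proof of Proposition/Definition \ref{p:hyperelliptic_locus}).
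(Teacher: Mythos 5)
Your proposal is correct, and its skeleton matches the paper's: fullness of the subcategory inside the stack $\mathcal{F}$ gives the prestack condition for free, so only effective descent of objects needs checking, and that reduces to showing that the property of being hyperelliptic (resp.\ a smooth quartic) descends along fppf covers. Where you diverge is in how that last step is carried out. The paper argues by hand at the level of rings: it covers the base by affines on which $X$ is presented as $V_+(q_2'+cz,q_4)$, refines the fppf cover to faithfully flat maps of affines $\Spec S \to \Spec R_i$, and uses injectivity of the faithfully flat ring map together with Lemma \ref{L:definition_does_not_depend_on_cover} to conclude $\alpha(c)=0\Rightarrow c=0$. You instead invoke the representability of the hyperelliptic locus: $X_i=X|_{U_i}$ hyperelliptic forces each $U_i\to B$ to factor through the closed immersion $B_{hyp}\hookrightarrow B$ by the universal property of Proposition \ref{p:hyperelliptic_locus}, so $B_{hyp}\times_B U_i\to U_i$ is an isomorphism for all $i$, and since ``isomorphism'' is fppf-local on the base, $B_{hyp}=B$. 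This is a clean and valid repackaging; note that it quietly uses the stability of hyperellipticity under arbitrary base change (so that $X\times_B B_{hyp}$ being hyperelliptic, and the application of the universal property, make sense), which follows from pulling back the local weighted-complete-intersection presentations but deserves a sentence. Your approach also buys a uniform treatment of $\mathcal{Q}$ via $B_{sq}$, whereas the paper simply cites Benoist for the smooth quartic case. Conversely, the paper's direct argument is more self-contained in that it does not lean on the universal property machinery of Section 4 at this point, only on the independence-of-presentation lemma.
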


\begin{proof}For the statement on smooth quartics, we refer to \cite{Benoist2013}. We prove the statement about $\mathcal{H}$.
	Since $\mathcal{H}$ is a full subcategory of the stack $ \mathcal{F} $, it is a prestack.
	It remains to show that any descent datum for any given fppf cover is effective. To do so, we consider a cartesian square 
	$$
	\begin{tikzcd}
		X'\ar[r] \ar[d,"f'"] &  X\ar[d,"f"] \\
		B' \ar[r,"g"]  &  B
	\end{tikzcd}
	$$ 
	where $f\colon X \to B $ and $f'\colon X' \to B' $ are Fano threefolds with the latter being hyperelliptic and g is an fppf morphism. Let $ B = \bigcup_{i\in I} U_i $, $U_i = \Spec R_i$ be an open affine cover such that for each $ i\in I $ there are $ q_4 \in R_i[x_0,\dots,x_4,z]_4$, $q_2\in R_i[x_0,\dots,x_4]_2 $ and $ c\in R_i$ such that 
	$$
	(V_+(q_2+cz,q_4),\Oc(1)) \cong 	(f\inv(U_i),\omega_{f\inv(U_i)/U_i}\inv). 
	$$
	Now we can find a cover $ B' = \bigcup_{i\in I,j\in J_i} V_{i,j} $,
	where $V_{i,j}$ is quasi compact and $ g(V_{i,j}) = U_i $. Fix some $i,j$. Let $ V_{i,j} = \bigcup_{m=1}^n W_m$ be a finite open affine cover. Set $ W = \coprod_{m=1}^n W_m $.
	Then $ W $ is affine, $X' \times_{B'} W $ is hyperelliptic and the map $ W \to U_i$ induced  by $g$ is faithfully flat. The corresponding ring map 
	$$ \alpha\colon R_i \to S := \Gamma(W,\Oc_W) $$ is faithfully flat, hence injective. As 
	$$X' \times_{B'} W \cong V_+(\alpha(q_2)+\alpha(c)z,\alpha(q_4)) \sbe \PP_S(1,1,1,1,1,2)$$ is hyperelliptic we see that $\alpha(c)=0$ by Lemma \ref{L:local_presentation_for_hyperelliptic_Fanos}. Hence $ c = 0 $. As $i,j$ were chosen arbitrarily, this proves that $f\colon X \to B $ is hyperelliptic.
\end{proof}

\begin{proof}[Proof of Theorem {\ref{t:stratification of Fano stack}}]
	That $ \Hc \to \Fc $ is a representable closed immersion and $ \mathcal{Q} \to \Fc $ is a representable open immersion is a reformulation of Proposition \ref{p:hyperelliptic_locus} and Proposition \ref{p:smooth_quartic_locus}, respectively. For details, see \cite{LichtPdh2022}. That for any field $k$ with $2\in k^\times $, we have $ \mathcal{F}(k) = \mathcal{Q}(k) \sqcup \mathcal{H}(k) $ follows from Proposition \ref{p:classifying_Fanos_over_nonclosed_field}.
\end{proof}


\section{First-order deformations of hyperelliptic Fano threefolds of type (1,1,4)}

In this section, we determine the space of first-order deformations of hyperelliptic Fano threefolds of type (1,1,4).
Let $k$ be a field with $2\in k^\times$, let $k[\epsilon] := k[x]/(x^2)$ be the ring of dual numbers over $k$ and let $ f\colon X \to \Spec k $ be a hyperelliptic Fano threefold of type (1,1,4). 
A \emph{first-order lift} of $ X$ is given by a pair of morphisms 
$$ (\tilde{f}\colon \tilde{X} \to \Spec k[\epsilon], i\colon X \to \tilde{X})$$ such that the square 
$$
\begin{tikzcd}
	X \ar[r,"i"] \ar[d,"f"] &   \tilde{X} \ar[d,"\tilde{f}"] \\
	\Spec k \ar[r,"g"]  &  \Spec k[\epsilon]
\end{tikzcd}
$$
is cartesian, where $ g $ is the closed immersion given by $ \epsilon \to 0 $.
Two lifts $(\tilde{f}'\colon \tilde{X}' \to \Spec k[\epsilon], i'\colon X \to \tilde{X}) $ and $(\tilde{f}\colon \tilde{X} \to \Spec k[\epsilon], i\colon X \to \tilde{X})$ are considered to be equivalent if there is an isomorphism $\phi\colon \tilde{X}' \to \tilde{X}$ over $ k[\epsilon] $ such that $  \phi \circ i' = i $. An equivalence class of first-order lifts is called a \emph{first-order deformation}.

We are interested in characterizing first-order deformations of $X$ which are again hyperelliptic.
For this note that $X$ comes with an involution associated to the double cover $ \iota\colon X \to X $; see Definition \ref{D:double_cover_flip_involution}.
If $(\tilde{f}\colon \tilde{X} \to \Spec k[\epsilon], i\colon X \to \tilde{X})$ is a first-order lift such that $ \tilde{f}\colon \tilde{X} \to \Spec k[\epsilon] $ is hyperelliptic, then there is an involution $ \tilde{\iota}\colon \tilde{X} \to \tilde{X} $ which extends $\iota$. From the explicit description of $X$ as a weighted complete intersection it follows that a lift is hyperelliptic if and only if the involution of $X$ extends to the lift. 
This observation allows us to determine the space of deformations that are hyperelliptic.

\begin{Th}\label{t:deformations of hyperelliptic Fanos}
	If $X$ is a hyperelliptic Fano threefold of type $(1,1,4)$ over a field $k$ with $2\in k^\times$  with involution $\iota$, then the space of first-order deformations of $X$ that are hyperelliptic is the subspace of $\iota$-invariants  
	$$
	\Homology^1(X,T_X)^\iota \sbe \Homology^1(X,T_X)
	$$ 
	of the space of all first-order deformations. 
\end{Th}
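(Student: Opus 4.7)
The plan is to combine the equivalence \emph{``a first-order lift is hyperelliptic if and only if the involution $\iota$ of $X$ extends to the lift''} (established in the discussion preceding the theorem) with the standard deformation-theoretic statement that first-order $\iota$-equivariant deformations of $X$ are classified by $\Homology^1(X, T_X)^\iota$.

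I would proceed as follows. Recall that first-order deformations of the smooth projective $k$-variety $X$ are classified by $\Homology^1(X, T_X)$: to $\xi \in \Homology^1(X, T_X)$ one associates the deformation $\tilde{X}_\xi \to \Spec k[\epsilon]$, obtained by gluing affine pieces of $X \times_k \Spec k[\epsilon]$ via the \v{C}ech cocycle $\id + \epsilon\xi$. Since $\iota$ is an automorphism of $X$, it acts linearly on $\Homology^1(X, T_X)$ via the isomorphism $T_X \isomto \iota^* T_X$ induced by $\iota$, and under the classification this action sends the class of $\tilde{X}_\xi$ to the class of the pulled-back deformation $\iota^*\tilde{X}_\xi$.

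The heart of the argument is the claim: $\iota \colon X \to X$ extends to an automorphism $\tilde{\iota}$ of $\tilde{X}_\xi$ over $\Spec k[\epsilon]$ if and only if $\iota^*\xi = \xi$. The ``only if'' direction is immediate: such an extension is precisely a $k[\epsilon]$-isomorphism $\tilde{X}_\xi \isomto \iota^*\tilde{X}_\xi$ of deformations, which forces the two classes to coincide. For the converse, $\iota^*\xi = \xi$ means that $\tilde{X}_\xi$ and $\iota^*\tilde{X}_\xi$ represent the same deformation, so there is a $k[\epsilon]$-automorphism of $\tilde{X}_\xi$ restricting to $\iota$ on the special fiber; this is the required extension $\tilde{\iota}$. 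In view of the pre-theorem observation, this already identifies the hyperelliptic first-order deformations, as a subset of all first-order deformations, with $\Homology^1(X, T_X)^\iota$.

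The main technical subtlety---and the step I expect to require the most care---is to verify that whenever $\iota$ extends to an \emph{automorphism} $\tilde{\iota}$, it can in fact be chosen so as to be an honest \emph{involution}, so that the lift is genuinely a hyperelliptic Fano scheme in the sense of Section \ref{s:families of Fanos}, equipped with a double-cover structure. Any such $\tilde{\iota}$ satisfies $\tilde{\iota}^2 = \id_{\tilde{X}_\xi} + \epsilon\, d$ for a unique $d \in \Homology^0(X, T_X)$, and a short calculation using $\tilde{\iota} \circ \tilde{\iota}^2 = \tilde{\iota}^2 \circ \tilde{\iota}$ shows that $d$ is $\iota$-invariant. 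Since $\cha(k) \neq 2$, replacing $\tilde{\iota}$ by $\tilde{\iota} \circ (\id - \tfrac{\epsilon}{2} d)$ (regarded as an automorphism of $\tilde{X}_\xi$ restricting to the identity on $X$) yields a genuine involution extending $\iota$, completing the identification.
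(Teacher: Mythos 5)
Your proof is correct, but it follows a genuinely different route from the paper. Both arguments rest on the pre-theorem observation that a first-order lift is hyperelliptic if and only if $\iota$ extends to it; from there the paper invokes equivariant deformation theory (the classification of $G$-equivariant first-order deformations by the equivariant cohomology group $\Homology^1(X;G,T_X)$ for $G=\ZZ/2\ZZ$) and then Grothendieck's Tohoku result to identify $\Homology^1(X;G,T_X)$ with $\Homology^1(X,T_X)^\iota$, the higher group cohomology vanishing because $2\in k^\times$. You instead argue directly at the level of \v{C}ech cocycles: an extension of $\iota$ to $\tilde{X}_\xi$ is an equivalence between $\tilde{X}_\xi$ and the re-marked lift, so it exists precisely when $\iota^*\xi=\xi$, and your torsor computation (extensions of $\iota$ form a torsor under $\Homology^0(X,T_X)$, the defect $d$ of $\tilde{\iota}^2$ is $\iota$-invariant, and correcting by $\id-\tfrac{\epsilon}{2}d$ kills it) upgrades the automorphism to an honest involution. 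Your approach is more elementary and self-contained, avoiding the two external citations, and it makes explicit a point the paper leaves implicit in its appeal to equivariant deformation theory: that an equivariant structure (a genuine $\ZZ/2\ZZ$-action on the lift, needed to conclude hyperellipticity via the quotient description) really can be produced from a mere automorphism extending $\iota$, which is exactly where $\cha(k)\neq 2$ enters in both arguments. The paper's route is shorter and generalizes more readily to other equivariant settings, but buys this at the cost of the cited machinery.
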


\begin{proof}
	Let $ G = \ZZ/2\ZZ $. Then $G$ acts on $X$ via $\iota$. By the observation above and \cite[Proposition~3.2.7]{Bys09}, the space of first-order deformations of $X$ that are hyperelliptic is given by the equivariant sheaf cohomology group $ \Homology^1(X;G,T_X) $. By \cite[Theorem~1.4]{LichtTorelli2022}, the action of $G$ on $ T_X $ is faithful. Therefore we can apply \cite[Proposition~5.2.3]{Gro57} to see that $ \Homology^1(X;G,T_X) = \Homology^1(X,T_X)^G = \Homology^1(X,T_X)^\iota$.
\end{proof}



\section{Geometric Hyperbolicity and Persistence}

Let $k$ be an algebraically closed field of characteristic zero. We follow \cite[\S 2]{JLIntegralPoints2019} and say that  a finitely presented algebraic stack $X$  over  $k$ is   \emph{geometrically hyperbolic over $k$} if,  for every smooth integral curve $C$ over $k$, every point   $c\in C(k)$ and object $ x \in X(k)$, the set  of isomorphism classes of morphisms $ f\colon C \to X$ with $ f(c) \cong x $ is finite. 
We will show that geometric hyperbolicity implies the persistence of arithmetic hyperbolicity. The proof of this fact uses an inductive argument. The following lemma gives us the induction step.

\begin{Lemma}\label{l:arith hyp + geo hyp implie trdeg 1 extesions are arith hyp}
	Let $ k \sbe L$ be an extension of algebraically closed fields of characteristic zero such that $L$ is of transcendence degree $1$ over $k$. Let $X$ be a finite type separated arithmetically hyperbolic Deligne-Mumford stack over $k$.
	If $ X $ is geometrically hyperbolic over $k$, then $X_L$ is arithmetically hyperbolic over $L$. 
\end{Lemma}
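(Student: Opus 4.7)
My plan is to reduce to the arithmetic hyperbolicity of $X$ over $k$ (for points of $X_L$ coming from $k$) and to the geometric hyperbolicity of $X$ over $k$ (for truly $L$-transcendental points, via curves), glued by a basepoint-evaluation trick.

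First, I would take $A \sbe k$ a $\ZZ$-finitely generated subring and $\Xc_A$ a finitely presented model of $X$ over $A$ witnessing arithmetic hyperbolicity. Since $\Xc_A \otimes_A L \cong X_L$, the stack $\Xc_A$ is automatically a model of $X_L$ over $A$ (viewing $A \sbe k \sbe L$). It then suffices to show that, for every $\ZZ$-finitely generated subring $B$ with $A \sbe B \sbe L$, the image of $\pi_0(\Xc_A(B)) \to \pi_0(X(L))$ is finite. Fix such a $B$ and consider the $k$-subalgebra $M := k \cdot B \sbe L$; it is a finitely generated $k$-domain of transcendence degree at most one over $k$, since $L$ has transcendence degree one over $k$.

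I would then split into two cases. If $B \sbe k$ (equivalently $M = k$), the map $\Xc_A(B) \to X(L)$ factors through $\Xc_A(k) = X(k)$, and arithmetic hyperbolicity of $X$ over $k$ applied directly to $B$ bounds the image of $\pi_0(\Xc_A(B)) \to \pi_0(X(k))$, hence also its image in $\pi_0(X(L))$. Otherwise $M$ has Krull dimension one; let $\tilde M$ denote its normalization inside $\mathrm{Frac}(M)$ and set $U := \Spec \tilde M$, a smooth affine integral curve over $k$ which depends only on $B$. Base changing $x \colon \Spec B \to \Xc_A$ along $A \to k$, factoring through $\Spec M \hookrightarrow \Spec(B \otimes_A k)$, and composing with the normalization yields a morphism $f_x \colon U \to X$; the image $\bar x \in X(L)$ is then recovered as $\Spec L \to U \xrightarrow{f_x} X$, where $\Spec L \to U$ comes from the inclusion $\tilde M \hookrightarrow \mathrm{Frac}(M) \sbe L$. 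It therefore suffices to bound the set $\{f_x\}$ up to isomorphism.

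To this end, I would fix any $k$-point $c \in U(k)$, which exists since $k$ is algebraically closed and $U$ is a non-empty smooth integral curve over $k$. The $k$-algebra map $c^\sharp \colon \tilde M \to k$ dual to $c$ restricts along $B \hookrightarrow M \hookrightarrow \tilde M$ to a ring map $B \to k$ independent of $x$, whose image $A_1 \sbe k$ is a $\ZZ$-finitely generated subring containing $A$ (as $c^\sharp$ is $k$-linear). A short diagram chase identifies the evaluation $f_x(c) \in X(k)$ with the image of the restriction $x|_{A_1} \in \Xc_A(A_1)$ under the specialization $\Xc_A(A_1) \to X(k)$. Arithmetic hyperbolicity of $X$ over $k$ applied to $A_1 \supseteq A$ then forces $\{f_x(c)\}$ to lie in a finite set of isomorphism classes $\{y\} \sbe X(k)$. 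For each such $y$, geometric hyperbolicity of $X$ over $k$ applied to the triple $(U, c, y)$ produces only finitely many isomorphism classes of morphisms $U \to X$ sending $c$ to $y$. Composing each of these with the fixed inclusion $\Spec L \to U$ yields only finitely many distinct $L$-points, as required.

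The main obstacle will be making the two hyperbolicity hypotheses interact compatibly. The crucial observation is that $U$, the basepoint $c$, and the auxiliary subring $A_1 \sbe k$ all depend only on $B$ and not on $x$; this uniformity is what places every evaluation $f_x(c)$ inside the single arithmetically-hyperbolic image $\pi_0(\Xc_A(A_1)) \to \pi_0(X(k))$, allowing arithmetic hyperbolicity to control the basepoint evaluations while geometric hyperbolicity controls the morphisms $U \to X$ refining each evaluation.
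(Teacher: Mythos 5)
Your argument is correct, and although it follows the same overall strategy as the paper --- use arithmetic hyperbolicity over $k$ to confine the basepoint evaluation to a finite set, then geometric hyperbolicity over $k$ to bound the maps from a curve through each such point --- the execution is genuinely different. The paper spreads out $\Spec B \to \Spec A \to \Spec \ZZ$ until everything is smooth, obtaining an integral relative curve $\mathcal{C}=\Spec B'$ over an arithmetic base $\Spec A'$ with a section $c\in\mathcal{C}(A')$, and then needs two external inputs from \cite{JLChevalleyWeil2020} (finiteness of the full set $\pi_0(\Xc(A'))$ for $A'$ integrally closed, and finiteness of the fibres of $\pi_0(\Xc(B'))\to\pi_0(\Xc(L))$), plus the finite-diagonal spreading-out step, in order to prove the stronger statement that $\pi_0(\Xc(B'))$ itself is finite. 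You instead pass directly to the geometric curve $U=\Spec\widetilde{M}$ over $k$ obtained by normalizing $M=k\cdot B$, and you only bound the image of $\pi_0(\Xc(B))$ in $\pi_0(X(L))$ --- which is all the definition of arithmetic hyperbolicity requires --- so you can invoke the definition over $k$ verbatim for the subring $A_1=c^\sharp(B)\supseteq A$ and dispense with both auxiliary finiteness results. The two facts you assert without detail both check out: $A_1\supseteq A$ because $c^\sharp$ is $k$-linear and $A\sbe k$, and the composite $B\to B\otimes_A k\to M\to \widetilde{M}\to L$ is the inclusion $B\hookrightarrow L$, so $f_x\circ(\Spec L\to U)$ really is $\bar{x}$ and $f_x(c)$ really is the specialization of $x|_{A_1}$. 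The paper's route proves slightly more and tracks \cite[Lemma~4.2]{Javanpeykar2021} closely; yours is more self-contained and elementary.
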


\begin{proof}
	(If $X$ is a variety, then this is \cite[Lemma~4.2]{Javanpeykar2021}. We adapt the proof of \emph{loc. cit.} to the setting of stacks.)
	
	The notion of arithmetic hyperbolicity is independent of the chosen model; check for \cite[Lemma~4.8]{JLChevalleyWeil2020}.
	We choose a $\ZZ$-finitely generated subring $ A\sbe k $ and a model $\Xc$ for $ X $ over $A$. 
	Since $X$ is separated and Deligne-Mumford, it has a finite diagonal. 
	The property of having a finite diagonal spreads out; see \cite[B.3]{Rydh2015}. 
	Hence after possibly replacing $A$ with a finitely generated extension contained in $K$, we may and do assume that $\Xc$ has a finite diagonal.  
	Note that $\Xc$ is also a model for $ X_L $.
	
	Let $ B \sbe L $ be a $\ZZ$-finitely generated subring containing $A$. We will find a $\ZZ$-finitely generated subring $B' \sbe L$ containing $B$ such that $\pi_0(\Xc(B'))$ is finite.
	We may assume $B$ is not contained in $ k $. Otherwise, we are done since $X$ is arithmetically hyperbolic over $k$. 
	
	The  morphisms $ \Spec B  \to \Spec A$ and $\Spec A \to \Spec \ZZ$ are generically smooth and finitely presented. As smoothness spreads out, we can find finitely generated extensions $ A \sbe A' \sbe k $ and $ B \sbe B' \sbe L $ such that $A' \sbe B'$ and both $ \Spec B'  \to \Spec A'$ and $\Spec A' \to \Spec \ZZ $ are smooth.
	The scheme $\mathcal{C} = \Spec B'$ is integral and smooth of relative dimension $1$ over $  \Spec A' $.
	As $k$ is algebraically closed, the affine curve $ C = \mathcal{C}_k $ has $k$-sections. 
	Hence, after possibly replacing $A'$ and $B'$ by finitely generated extensions still satisfying $ A' \sbe k $ and $ B' \sbe L$ and the smoothness properties above, there is a section $ c \in  \mathcal{C}(A') $. 
	Since $A'$ is smooth over $ \ZZ$, it is in particular integrally closed. Therefore, as $ X $ is arithmetically hyperbolic over $k$,
	by applying \cite[Theorem~4.23]{JLChevalleyWeil2020}, we see that 
	$ \pi_0(\Xc(A')) $ is finite.    
	For any morphism $f\colon \mathcal{C} \to \Xc $, we have $ f(c) \in \Xc(A') $. Therefore, we get an inclusion
	$$
	\pi_0 (\Xc(B'))=\pi_0 (\Xc(\mathcal{C})) = \Hom_\mathcal{C}(\mathcal{C},\Xc) \sbe \bigcup_{x \in \pi_0(\Xc(A'))}  \Hom_\mathcal{C}((\mathcal{C},c),(\Xc,x)).
	$$
	Furthermore, consider the diagram:
	$$
	\begin{tikzcd}
		\Hom_\mathcal{C}((\mathcal{C},c),(\Xc,x)) \ar[r,"\alpha"] \ar[d,"\iota"] &  \Hom_k((C,c_k),(X,x_k)) \ar[d,] \\
		\Hom_\mathcal{C}(\mathcal{C},\Xc) = \pi_0(\Xc(B')) \ar[r,"\beta"]  &  \pi_0(\Xc(L)) = \Hom(L,X)
	\end{tikzcd}
	$$
	The natural inclusion map $\iota$ is injective and the pullback map $\beta$ has finite fibres by \cite[Proposition~4.19]{JLChevalleyWeil2020}. Hence $\alpha$ has finite fibres. Since $X$ is geometrically hyperbolic over $k$, the set $\Hom_k((C,c_k),(X,x_k))$ is finite. From this, we deduce that $\pi_0 (\Xc(B'))$ is finite.	   
\end{proof}

\begin{Lemma}\label{l:geo hyp implies arith hyp}
	Let $ k \sbe L$ be an extension of algebraically closed fields of characteristic zero and let $X$ be a finite type separated arithmetically hyperbolic Deligne-Mumford stack over $k$ such that, for every algebraically closed subfield $K \sbe L$ containing $k$,  the stack $X_K $ is geometrically hyperbolic over $K$. Then   $X_L$ is arithmetically hyperbolic over $L$.  
\end{Lemma}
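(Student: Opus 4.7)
The plan is to reduce the general statement to Lemma~\ref{l:arith hyp + geo hyp implie trdeg 1 extesions are arith hyp} in two steps: first, I would handle the case that $L$ has finite transcendence degree over $k$ by induction on the transcendence degree; then I would reduce the general case to the finite-transcendence-degree case using the observation that any $\ZZ$-finitely generated subring of $L$ sits inside an algebraically closed subfield of $L$ that has finite transcendence degree over $k$.

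For the first step, I claim that for every algebraically closed intermediate field $k \sbe K \sbe L$ with $n := \trdeg(K/k) < \infty$, the stack $X_K$ is arithmetically hyperbolic over $K$. Pick a transcendence basis $t_1,\dots,t_n$ of $K$ over $k$ and let $K_i$ be the algebraic closure in $K$ of $k(t_1,\dots,t_i)$, so that $K_0 = k$ and $K_n = K$. Proceed by induction on $i$: the base case $i=0$ is part of the hypothesis of the lemma. For the inductive step, $K_i \sbe K_{i+1}$ is a transcendence degree $1$ extension of algebraically closed fields; $X_{K_i}$ is arithmetically hyperbolic over $K_i$ by induction and geometrically hyperbolic over $K_i$ by the lemma's hypothesis. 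Since the properties of being finite type, separated, and Deligne-Mumford are stable under base change of the ground field, Lemma~\ref{l:arith hyp + geo hyp implie trdeg 1 extesions are arith hyp} applies and yields that $X_{K_{i+1}}$ is arithmetically hyperbolic over $K_{i+1}$.

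For the second step, fix a $\ZZ$-finitely generated subring $A \sbe k$ together with a model $\Xc$ of $X$ over $A$. Since $A \sbe k \sbe L$, the same $\Xc$ is a model of $X_L$ over $A$. I claim that this choice witnesses arithmetic hyperbolicity of $X_L$ over $L$. Indeed, given any $\ZZ$-finitely generated subring $A' \sbe L$ containing $A$, let $K$ be the algebraic closure in $L$ of the subfield generated by $k$ and $\mathrm{Frac}(A')$. Then $\trdeg(K/k) < \infty$ and $A' \sbe K$. By the first step $X_K$ is arithmetically hyperbolic over $K$, so, by independence of the chosen model (\cite[Lemma~4.8]{JLChevalleyWeil2020}) applied to $\Xc$ viewed as a model over $A \sbe K$, there is a $\ZZ$-finitely generated subring $B \sbe K$ containing $A$ such that, for every $\ZZ$-finitely generated $B' \sbe K$ containing $B$, the set $\im(\pi_0(\Xc(B')) \to \pi_0(\Xc(K)))$ is finite. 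Taking $B'$ to be the subring of $K$ generated by $A'$ and $B$, the composition $\pi_0(\Xc(A')) \to \pi_0(\Xc(B')) \to \pi_0(\Xc(K)) \to \pi_0(\Xc(L))$ then has image contained in the image of a finite set, and hence is itself finite.

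The only delicate point I see is the bookkeeping in the second step: a single model $\Xc$ coming from $k$ must simultaneously witness arithmetic hyperbolicity of $X_L$ over $L$ for every finitely generated $A'$, whereas the auxiliary field $K$ and ring $B \sbe K$ produced along the way genuinely depend on $A'$. Absorbing this dependence into a statement about a fixed model is precisely what the cited model-independence lemma enables, so I expect no further obstruction.
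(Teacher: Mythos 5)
Your proof is correct and follows essentially the same route as the paper: reduce to the case where $L$ has finite transcendence degree over $k$, then induct along a chain of transcendence-degree-one extensions of algebraically closed fields using Lemma~\ref{l:arith hyp + geo hyp implie trdeg 1 extesions are arith hyp}. The paper dispatches the reduction to finite transcendence degree in a single sentence (arithmetic hyperbolicity only concerns points valued in $\ZZ$-finitely generated subrings, each of which lies in an algebraically closed subfield of finite transcendence degree over $k$), whereas you spell this out carefully via the model-independence lemma; this is additional detail, not a different argument.
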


\begin{proof} 
	As arithmetic hyperbolicity expresses the finiteness of points valued in  $\ZZ$-finitely generated integral domains of characteristic zero,
	the stack $X_L$ is arithmetically hyperbolic if and only if $ X_K $ is arithmetically hyperbolic for all algebraically closed subfields $ K \sbe L$ that have a finite transcendence degree over $k$. Therefore, we may and do assume that $L/k$ has a finite transcendence degree.
	Let $ k = K_0 \sbe K_1 \sbe \dots \sbe K_n = L$ be a chain of extensions of algebraically closed fields each having transcendence degree one. We apply Lemma \ref{l:arith hyp + geo hyp implie trdeg 1 extesions are arith hyp} inductively to see that $ X_{K_i} $ is arithmetically hyperbolic for all $i$.
\end{proof}

\begin{Th}\label{t:geometric hyp implies algebraic hyp presists}
	Let $k$ be an uncountable algebraically closed field of characteristic zero. Let $X$ be a finite type separated Deligne-Mumford stack over $k$. If $X$ is arithmetically hyperbolic over $k$ and geometrically hyperbolic over $k$, then $ X_L/L$ is arithmetically hyperbolic over $L$ and geometrically hyperbolic over $L$ for any algebraically closed field extension $k\sbe L$.
\end{Th}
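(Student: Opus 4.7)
The plan is to deduce the theorem from Lemma~\ref{l:geo hyp implies arith hyp} by establishing the following persistence statement for geometric hyperbolicity.

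\emph{Claim.} For every algebraically closed field extension $k\sbe K$, the stack $X_K$ is geometrically hyperbolic over $K$.

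Granting the Claim, the hypothesis of Lemma~\ref{l:geo hyp implies arith hyp} is satisfied for the extension $k\sbe L$, so $X_L$ is arithmetically hyperbolic over $L$; and geometric hyperbolicity of $X_L$ over $L$ is exactly the case $K=L$ of the Claim.

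To prove the Claim, I would argue by contradiction via a spreading out and specialization argument that relies essentially on the uncountability of $k$. Suppose $X_K$ is not geometrically hyperbolic over $K$. Then there exist a smooth integral curve $C$ over $K$, a point $c\in C(K)$, an object $x\in X(K)$, and an infinite sequence of pairwise non-isomorphic morphisms $f_n\colon C\to X_K$ with $f_n(c)\cong x$. Since $X$ is of finite type and each $f_n$ involves only finitely many elements of $K$, the whole collection descends to a countably generated $k$-subalgebra $R\sbe K$: after shrinking $\Spec R$ if necessary, we obtain a smooth relative curve $\mathcal{C}\to \Spec R$, a section $\sigma\in\mathcal{C}(R)$, an object $\xi\in X(R)$, and morphisms $F_n\colon \mathcal{C}\to X_R$ over $\Spec R$ with $F_n\circ\sigma\cong\xi$ and pulling back to $f_n$ over $\Spec K$.

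For each pair $n\neq m$, the locus $Z_{n,m}\sbe \Spec R$ over which $F_n$ and $F_m$ become isomorphic is a constructible subset, and it is a \emph{proper} subset since $f_n\not\cong f_m$ means that the generic point of $\Spec R$ lies outside $Z_{n,m}$. Writing $R$ as a countable filtered colimit of finite type $k$-subalgebras and using that $k$ is uncountable and algebraically closed, the countable union $\bigcup_{n\neq m}Z_{n,m}$ does not exhaust the $k$-points of $\Spec R$: there are uncountably many such points, whereas each $Z_{n,m}(k)$ lies in a proper Zariski closed subset of a finite type $k$-model through which it is defined. Choosing $b\in \Spec R(k)$ outside $\bigcup_{n\neq m}Z_{n,m}$ produces a smooth integral curve $\mathcal{C}_b$ over $k$, a point $\sigma(b)\in \mathcal{C}_b(k)$, an object $\xi|_b\in X(k)$, and an infinite sequence of pairwise non-isomorphic morphisms $F_n|_b\colon \mathcal{C}_b\to X$ with $F_n|_b\circ\sigma(b)\cong \xi|_b$, contradicting geometric hyperbolicity of $X$ over $k$.

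The main obstacle is the specialization step: one must justify, using that $X$ is a finite type separated Deligne--Mumford stack, that each $Z_{n,m}$ is a proper constructible subset of $\Spec R$ (so that the underlying Isom functors are representable and well-behaved under base change), and that the countable union $\bigcup_{n\neq m}Z_{n,m}$ genuinely misses some $k$-point of $\Spec R$; it is precisely here that the uncountability hypothesis on $k$ must be invoked.
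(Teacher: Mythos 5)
Your reduction is exactly the paper's proof: the theorem is obtained by combining Lemma~\ref{l:geo hyp implies arith hyp} with the persistence of geometric hyperbolicity over uncountable algebraically closed fields, and your Claim is precisely the statement the paper cites for the latter, namely \cite[Lemma~2.4]{JLIntegralPoints2019}. Your spreading-out and specialization sketch is the standard proof of that cited lemma (with the uncountability of $k$ entering exactly where you flag it), so the approach is essentially identical.
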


\begin{proof}
	This is a combination of Lemma \ref{l:geo hyp implies arith hyp} with the fact that geometric hyperbolicity over uncountable fields persists   \cite[Lemma~2.4]{JLIntegralPoints2019}.
\end{proof}

The following two lemmata will be used to prove that $\Fc$ is geometrically hyperbolic. 

\begin{Lemma}\label{l:X(C) injects X(k(C))}
	Let $X$ be a finite type separated  Deligne-Mumford stack over a field $k$ and let $C$ be a smooth integral curve over $k$ with function field $K$. Then the map $ \pi_0(X(C)) \to \pi_0 (X(K))$ is injective. 
\end{Lemma}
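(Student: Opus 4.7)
My plan is to reformulate injectivity in terms of the Isom sheaf and then exploit the properness of the diagonal of a separated Deligne-Mumford stack. Concretely, injectivity amounts to the following: given two objects $f,g\in X(C)$ whose restrictions to $\Spec K$ become isomorphic in $X(K)$, produce an isomorphism $f\cong g$ already over $C$. Form the Isom sheaf
$$
I \;=\; \underline{\mathrm{Isom}}_C(f,g) \;=\; C \times_{X \times_k X,\, (f,g)} X,
$$
where $X \to X\times_k X$ is the diagonal. A section of $I \to C$ over an open (or any $C$-scheme $T$) is exactly an isomorphism between $f|_T$ and $g|_T$.

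The next step is to observe that $I \to C$ is finite. Indeed, $X$ is Deligne-Mumford, so its diagonal is unramified, hence in particular quasi-finite; being separated, the diagonal is also proper. A proper quasi-finite morphism is finite, so the diagonal of $X$ is finite, and this is preserved by base change. Consequently $I$ is a finite $C$-scheme (it is an algebraic space, but being quasi-finite and separated over a scheme it is in fact a scheme). The assumed isomorphism in $X(K)$ provides a $K$-point $s\colon \Spec K \to I$ lying over the generic point $\eta\in C$. Let $p \in I$ be its image. Since $I\to C$ is finite and $s$ maps to $\eta$, the residue field $k(p)$ is a finite extension of $K$, while $s$ provides a retraction $k(p)\hookrightarrow K$; hence $k(p)=K$.

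To conclude, let $Z\subseteq I$ be the closure of $\{p\}$ equipped with its reduced induced subscheme structure. Then $Z$ is integral, closed in the finite $C$-scheme $I$, and dominates $C$ with function field $K$. Thus $Z\to C$ is a finite birational morphism between integral schemes, and $C$ is a smooth, hence normal, integral curve. By Zariski's Main Theorem (or directly by normality), a finite birational morphism onto a normal integral target is an isomorphism, so $Z \isomto C$. The inverse $C \to Z \hookrightarrow I$ is a section of $I\to C$ and therefore gives the desired isomorphism $f\cong g$ in $X(C)$.

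The only real subtlety lies in knowing that the diagonal of $X$ is \emph{finite}; everything else is formal. This finiteness uses both hypotheses on $X$ essentially: the Deligne-Mumford assumption for unramifiedness of the diagonal, and separatedness for its properness. Without either one, the Isom space could fail to have the spreading-out property used in the last paragraph.
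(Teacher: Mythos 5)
Your proof is correct and follows essentially the same route as the paper: both reduce to the finiteness of the Isom scheme over $C$ (via the finite diagonal of a separated Deligne--Mumford stack) and then extend the generic section. The only difference is cosmetic --- the paper extends the section by the valuative criterion of properness, while you take the closure of the generic point and invoke normality of $C$; your write-up also spells out why the diagonal is finite, which the paper leaves implicit.
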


\begin{proof}
	Let $ g_1,g_2 \in X(C)$ be objects.
	Since the diagonal $ \Delta\colon X \to X \times_k X $ is finite, the Isom-sheaf $\mathrm{Isom}_{X(C)}(g_1,g_2)$ is a finite scheme over $C$.
	In particular, every generic section of $$\mathrm{Isom}_{X(C)}(g_1,g_2) \to C $$ extends by the valuative criterion of properness (uniquely) to a section.  
	I.e. if $g_1,g_2$ are isomorphic over $K$, then they are isomorphic over $C$.
\end{proof}

\begin{Lemma}\label{l:q finite preimage of geo hyp is geo hyp}
	Let $ f\colon X \to Y $ be a quasi-finite representable morphism of separated finite type Deligne-Mumford stacks over a field $k$. If $Y$ is geometrically hyperbolic, then $X$ is geometrically hyperbolic.
\end{Lemma}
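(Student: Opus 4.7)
The plan is to reduce geometric hyperbolicity of $X$ to that of $Y$ by post-composing with $f$, and then to bound the resulting fibers using quasi-finiteness of $f$.

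Fix a smooth integral curve $C$ over $k$ with generic point $\eta$ and function field $K = k(C)$, a $k$-point $c \in C(k)$, and an object $x \in X(k)$; set $y := f(x) \in Y(k)$. Post-composition with $f$ induces a map $\Phi$ from the set $S_X$ of isomorphism classes of morphisms $g\colon C \to X$ with $g(c) \cong x$ to the analogous set $S_Y$ with basepoint $y$. Since $Y$ is geometrically hyperbolic over $k$, the target $S_Y$ is finite, so it suffices to show that each fiber of $\Phi$ is finite.

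For this, fix $h\colon C \to Y$ and form the base change $\tilde X := X \times_Y C$. Since $f$ is representable, $\tilde X$ is an algebraic space; since $X$ and $Y$ are separated over $k$ the morphism $f$ is separated, and by hypothesis it is quasi-finite, so the projection $\pi\colon \tilde X \to C$ inherits both properties. By the universal property of the $2$-fibered product, $2$-isomorphism classes of pairs $(g,\alpha)$ with $g\colon C \to X$ and $\alpha\colon f\circ g \xrightarrow{\sim} h$ are in bijection with sections of $\pi$, and the forgetful map $(g,\alpha)\mapsto [g]$ surjects onto the fiber of $\Phi$ over $[h]$. It therefore suffices to bound the number of sections of $\pi$. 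Since $\pi$ is separated, two sections coinciding at $\eta$ coincide on all of $C$, so the set of sections injects into $\tilde X(K) = \tilde X_K(K)$. Because $\pi$ is quasi-finite and of finite type, the generic fiber $\tilde X_K$ is a finite disjoint union of spectra of Artin local $K$-algebras, so $\tilde X_K(K)$ is finite.

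Combining the two bounds, $S_X$ is finite, proving that $X$ is geometrically hyperbolic over $k$. The main obstacle is the $2$-categorical bookkeeping in the middle step: one must identify $\Phi^{-1}([h])$ with a quotient of the set of sections of $\pi$ via the universal property of the $2$-fibered product, and use representability of $f$ to upgrade this $2$-fibered product to an honest algebraic space over $C$ so that the classical separatedness and quasi-finiteness arguments apply.
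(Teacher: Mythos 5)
Your proof is correct and follows essentially the same route as the paper: reduce to the fibers of post-composition with $f$ using geometric hyperbolicity of $Y$, then bound each fiber by quasi-finiteness at the generic point together with separatedness over the integral curve $C$. The only difference is packaging — you run the rigidity step through sections of the base change $X\times_Y C\to C$, whereas the paper invokes its Lemma~\ref{l:X(C) injects X(k(C))} (injectivity of $\pi_0(X(C))\to\pi_0(X(k(C)))$) directly on morphisms to $X$; these amount to the same argument.
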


\begin{proof}
	Let $C$ be a smooth integral curve over $k$ and $ c \in C(k) $, $x \in X(k) $ points. Let $ y = f(x) \in Y(k) $. The set $ \Hom_k((C,c),(Y,y)) $ is finite. Therefore it suffices to show that for any fixed $ g \in \Hom_k((C,c),(Y,y)) $, there are only finitely many isomorphism classes of morphisms $ \phi\colon (C,c) \to (X,x) $ such that $ f \circ \phi \cong g $. Consider the generic point $ \eta\colon \Spec k(C) \to C $. As $f$ is quasi-finite, there are only finitely many possibilities for the image of $\eta$ in $ X $ up to isomorphism. Hence by Lemma \ref{l:X(C) injects X(k(C))} there are only finitely many possibilities for the isomorphism class of $\phi$. 
\end{proof}


\section{Period map and arithmetic hyperbolicity}

In this section, we will proof Theorem \ref{t:Fano stack is arithmetically hyperbolic} and Theorem \ref{t:Shaferevic for Fanos}.
The following well-known lemma will be used to prove the arithmetic hyperbolicity of the stack $\Fc$.

\begin{Lemma}\label{l:inj on tangent spaces implies q fininite}
	Let $f\colon X \to Y $ be a morphism of finite type separated Deligne-Mumford stacks. If $f$ is injective on tangent spaces, then $f$ is quasi-finite. 
\end{Lemma}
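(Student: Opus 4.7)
My plan is to reduce the quasi-finiteness of $f$ to the finiteness of its geometric fibers, and then use the tangent-space hypothesis to force each geometric fiber to be zero-dimensional. Since $X$ and $Y$ are of finite type over $k$, the morphism $f$ is automatically of finite type. Thus it suffices to show that for every geometric point $\bar{y}\colon \Spec K \to Y$, the fiber $X_{\bar{y}} := X \times_Y \Spec K$ is a finite (algebraic) stack over $K$. Note $X_{\bar y}$ is a separated DM stack of finite type over $K$, since these properties are preserved by base change.

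The key step is the standard exact sequence for tangent spaces of a fiber. For any geometric point $\bar{x}\colon \Spec K \to X_{\bar{y}}$ mapping to $\bar y$ and with image $f(\bar x)$ in $Y$, analyzing $\Spec K[\epsilon]$-valued lifts gives an exact sequence
$$
0 \to T_{X_{\bar{y}},\bar{x}} \to T_{X,\bar{x}} \to T_{Y,f(\bar{x})}.
$$
By the hypothesis that $f$ is injective on tangent spaces, the rightmost map is injective, and hence $T_{X_{\bar{y}},\bar{x}} = 0$ for every geometric point $\bar{x}$ of $X_{\bar{y}}$.

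I would then invoke the usual inequality $\dim_{\bar{x}} X_{\bar{y}} \le \dim_K T_{X_{\bar{y}},\bar{x}}$ at each closed point; this can be checked after passing to an \'etale chart, since $X_{\bar{y}}$ is Deligne-Mumford and therefore admits an \'etale presentation by a scheme. From the vanishing of the tangent spaces, it follows that every irreducible component of $X_{\bar{y}}$ has dimension $0$, so $X_{\bar{y}}$ is $0$-dimensional. A $0$-dimensional, separated, finite type DM stack over a field is a finite disjoint union of classifying stacks of finite groups over finite field extensions, in particular, finite. Hence each geometric fiber of $f$ is finite, i.e., $f$ is quasi-finite.

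The only subtle point is the tangent-space exact sequence and the dimension inequality for DM stacks. Both reduce to the scheme case via an \'etale chart of $X$ around $\bar{x}$, so the argument is essentially the same as for schemes, which is the content of \cite[Tag 0B2B]{stacks-project} type statements, and presents no real obstacle.
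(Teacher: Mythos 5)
Your argument is correct, and it takes a somewhat different route from the paper. The paper's proof is a two-step reduction: choose \'etale atlases $Y'\to Y$ and $X'\to X\times_Y Y'$ by schemes, observe that the induced morphism of finite type schemes $X'\to Y'$ is still injective on tangent spaces, and then invoke the standard implications ``injective on tangent spaces $\Rightarrow$ unramified $\Rightarrow$ locally quasi-finite'' from the Stacks Project. You instead work fibre by fibre and essentially unwind that black box: the exact sequence $0\to T_{X_{\bar y},\bar x}\to T_{X,\bar x}\to T_{Y,f(\bar x)}$ (valid for Deligne--Mumford stacks, since there are no infinitesimal automorphisms) forces the fibres to have vanishing tangent spaces, hence to be zero-dimensional, hence finite. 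The paper's version is shorter because it delegates the dimension count to the cited results; yours is more self-contained and makes visible where the tangent-space hypothesis actually enters. Two cosmetic points: your description of a zero-dimensional separated finite type DM stack as a disjoint union of classifying stacks is slightly imprecise (there may be nilpotents and non-neutral gerbes), but all you need is that its underlying topological space is finite, which is clear from a quasi-compact \'etale chart; and both your proof and the paper's tacitly use the hypothesis ``injective on tangent spaces'' at all field-valued (or at least all closed) points, not just $k$-points, which is how it is satisfied in the application.
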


\begin{proof} 
	Let $Y' \to Y $ be an étale surjective morphism with $ Y' $ a scheme and let $ X' \to X \times_Y Y'$ be an an étale surjective morphism with $ X'$ a scheme.
	Then the morphism $ X' \to Y' $ induced by $f$ is injective on tangent spaces. Hence it is unramified and therefore quasi-finite; see \cite[Tag~02V5,Tag~02BG]{stkpr}. This shows $f$ is quasi-finite.	
\end{proof}

Let $\mathcal{A}_g$ be the stack of principally polarized abelian varieties of dimension $g$. Recall that $\mathcal{A}_g$ is a finite type separated Deligne-Mumford stack over $\ZZ$; see \cite{Olsson2008}. Over the complex numbers, the intermediate Jacobian of a Fano threefold defines a complex-analytic period map to $ \Ac_{g,\CC}^{an} $.
Our main result is that this period map has finite fibres in the setting of Fano threefolds of type $(1,1,4)$.  
\begin{Th}\label{t:period map is q finite}
	The period map $$ p:\mathcal{F_\CC}^{an} \to \Ac_{30,\CC}^{an} $$ is quasi-finite.
\end{Th}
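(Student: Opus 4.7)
The plan is to show quasi-finiteness by combining the stratification $\mathcal{F}_\CC = \mathcal{Q}_\CC \sqcup \mathcal{H}_\CC$ from Theorem~\ref{t:stratification of Fano stack} with the infinitesimal Torelli property \cite[Theorem~1.3]{LichtTorelli2022}. The overall strategy is to argue that $dp$ is injective on the tangent space of each stratum, deduce quasi-finiteness stratum-wise via the analytic analogue of Lemma~\ref{l:inj on tangent spaces implies q fininite} (which follows from the analytic inverse function theorem), and then glue the two conclusions along the stratification.

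On the open stratum $\mathcal{Q}_\CC^{an}$, the infinitesimal Torelli property for smooth quartics of the relevant type asserts that $dp_{[X]}\colon \Homology^1(X,T_X)\to T_{J(X)}\mathcal{A}_{30,\CC}$ is injective, so $p|_{\mathcal{Q}_\CC^{an}}$ is quasi-finite. On the closed stratum $\mathcal{H}_\CC^{an}$, the infinitesimal Torelli property for hyperelliptic Fano threefolds of type $(1,1,4)$ identifies $\ker(dp_{[X]})$ with the anti-invariant subspace $\Homology^1(X,T_X)^{-\iota}$ for the involution $\iota$ of Definition~\ref{D:double_cover_flip_involution}. By Theorem~\ref{t:deformations of hyperelliptic Fanos}, the tangent space to $\mathcal{H}_\CC^{an}$ at $[X]$ is the $\iota$-invariant subspace $\Homology^1(X,T_X)^{\iota}$; since $2\in \CC^\times$ this is complementary to $\ker(dp_{[X]})$, so $dp$ restricts to an injective map on $T_{[X]}\mathcal{H}_\CC^{an}$, and $p|_{\mathcal{H}_\CC^{an}}$ is quasi-finite.

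To combine these, I would assume for contradiction that some fibre $F = p^{-1}(p([X]))$ has positive dimension at $[X]\in\mathcal{F}_\CC^{an}$ and choose an irreducible analytic curve $C\subseteq F$ through $[X]$. If $[X]\in\mathcal{Q}_\CC^{an}$, then since $\mathcal{Q}_\CC^{an}$ is open the curve $C$ lies in $\mathcal{Q}_\CC^{an}$ near $[X]$, contradicting quasi-finiteness of $p|_{\mathcal{Q}_\CC^{an}}$. If $[X]\in\mathcal{H}_\CC^{an}$ and $C$ is contained in $\mathcal{H}_\CC^{an}$ near $[X]$, we contradict quasi-finiteness of $p|_{\mathcal{H}_\CC^{an}}$. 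Otherwise $C$ meets the open locus $\mathcal{Q}_\CC^{an}$; taking a smooth point $y$ of $C$ inside $\mathcal{Q}_\CC^{an}$ and arguing locally at $y$ reduces to the first case. The main obstacle is the precise form of infinitesimal Torelli at hyperelliptic points: one needs $\ker(dp)$ to split cleanly under the $\iota$-action so as to be transverse to $T_{[X]}\mathcal{H}_\CC^{an}$; this is exactly what \cite[Theorem~1.3]{LichtTorelli2022} combined with Theorem~\ref{t:deformations of hyperelliptic Fanos} provides, which is why the stratification argument above closes.
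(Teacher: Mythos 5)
Your proposal is correct and follows essentially the same route as the paper: injectivity of $dp$ on $\mathcal{Q}_\CC^{an}$ via infinitesimal Torelli for hypersurfaces \cite{CaG80}, injectivity of $dp$ on $T_{[X]}\mathcal{H}_\CC^{an}=\Homology^1(X,T_X)^\iota$ via Theorem \ref{t:deformations of hyperelliptic Fanos} combined with \cite[Theorem~1.3]{LichtTorelli2022}, and Lemma \ref{l:inj on tangent spaces implies q fininite} to get quasi-finiteness on each stratum. The only divergence is the final gluing, where the paper simply observes that every fibre of $p$ is the union of the (finite) fibres of $p\vert_{\mathcal{Q}}$ and $p\vert_{\mathcal{H}}$ by Theorem \ref{t:stratification of Fano stack}, so your curve-based contradiction argument, while valid, is more elaborate than needed.
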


\begin{proof}
	First, we consider the period map restricted to the smooth quartic locus.
	The infinitesimal Torelli problem for hypersurfaces is completely understood; see \cite{CaG80}. In particular, if $X$ is a smooth quartic, the differential of the period map 
		$$(dp)_X\colon 
	\Homology^1(X,T_X) \to \bigoplus_{p+q = n} \Hom_\CC\left( H^p(X,\Omega_X^q), H^{p+1}(X,\Omega_X^{q-1}) \right)
	$$ 
	is injective. Therefore, the period map restricted to $\mathcal{Q}$ is quasi-finite by Lemma \ref{l:inj on tangent spaces implies q fininite}.
	
	Furthermore, by Theorem \ref{t:deformations of hyperelliptic Fanos},
	if $X$ is a hyperelliptic Fano threefold over $\CC$ with involution $\iota$ asscociated to the double cover, then the tangent space of $\mathcal{H}$ at the object $X$ is $ \Homology^1(X,T_X)^\iota $. Therefore, by \cite[Theorem~1.3]{LichtTorelli2022}, the differential of the period map restricted to $ \mathcal{H}$ is injective. Again by Lemma \ref{l:inj on tangent spaces implies q fininite}, we conclude that $ p\vert_\Hc$ is quasi-finite. 
	
	Since $\mathcal{F}$ is the union of $\mathcal{Q}$ and $\Hc$ (Theorem \ref{t:stratification of Fano stack}), the period map $p$ is quasi-finite. 
\end{proof}

\begin{proof}[Proof of Theorem \ref{t:Fano stack is arithmetically hyperbolic}]
	By \cite{Faltings1984}, the stack $ (\Ac_{30})_{\oQ} $ is absolutely arithmetically hyperbolic. 
	Moreover, by \cite[Theorem~1.7]{JLIntegralPoints2019}, the stack $(\Ac_{30})_\CC$ is geometrically hyperbolic. 
	Note by Proposition \ref{P:facts about the stack of Fanos}.(5) the stack $\Fc_\CC$ is Deligne-Mumford.
	Hence by \cite[Theorem~6.4]{JLChevalleyWeil2020}, Lemma \ref{l:q finite preimage of geo hyp is geo hyp} and Theorem \ref{t:period map is q finite},
	the stack $\Fc_\CC$ is arithmetically hyperbolic and geometrically hyperbolic. Hence by Theorem \ref{t:geometric hyp implies algebraic hyp presists}, the stack $X_L$ is arithmetically and geometrically hyperbolic for any algebraically closed field extension $ L\spe \CC $. 
\end{proof}

\begin{proof}[Proof of Theorem \ref{t:Shaferevic for Fanos}]
	Let $K$ be a number field and let $S$ be a finite set of places on $K$. We may assume that all places lying over $2$ are contained in $S$.
	As seen in \cite[Proposition~2.10]{JLo18}, if $B$ is a connected scheme and $f:X\to B$ is a Fano threefold, then the type is constant in the fibres of $f$.
	Hence, the set of isomorphism classes of Fano threefolds $X$  of type $(1,1,4)$ over $K$ with good reduction outside $S$ is the image
	$$
	\mathrm{im} \left(\pi_0(\Fc(\OKs)) \to \pi_0(\Fc(K))\right). 
	$$
	Since $ \Fc_{\oQ} $ is arithmetically hyperbolic by Theorem \ref{t:Fano stack is arithmetically hyperbolic}, this set is finite by  \cite[Theorem~4.22]{JLChevalleyWeil2020}.
\end{proof}


\printbibliography[heading=bibintoc]
\end{document}